\newif\ifTwoColumn
\newif\ifTechReport
	\newenvironment{proof}{
	\begin{IEEEproof}
	}{
	\end{IEEEproof}
	}
		\def\thm@space@setup{%
		  \thm@preskip=\parskip \thm@postskip=0pt
		}
\newtheorem{theorem}{Theorem}
\newtheorem{lemma}{Lemma}
\newtheorem{corollary}[lemma]{Corollary}
\newtheorem{definition}{Definition}
\newtheorem{ass}{Assumption}
\newcommand{\R}{\mathbb{R}}
\newcommand{\N}{\mathbb{N}}
\newcommand{\Nn}[1]{\N_{#1}}
\renewcommand{\Re}{\mathbb{R}}
\newcommand{\nullspace}{\mathcal{N}}
\newcommand{\one}{\mathbf{1}}
\newcommand{\sign}{\mathrm{sgn}}
\renewcommand{\int}{\mathrm{int}}
\newcommand{\bigmid}{\;\middle|\;}
\def\*#1{\mathbf{#1}}
\newcommand{\bm}{\begin{pmatrix}}
\newcommand{\bmend}{\end{pmatrix}}
\newcommand{\sol}[2]{\mathrm{SOL}\left(#1,#2\right)}
\title{Strong Stationarity Conditions for Optimal Control of Hybrid Systems}
\author{Andreas B. Hempel, Paul J. Goulart, and John Lygeros
\thanks{The research leading to these results has received funding from the European Commission under the project SPEEDD, grant number 619435.}
\thanks{Andreas Hempel and John Lygeros are with the Automatic Control Laboratory, ETH Z\"urich, Physikstrasse 3, 8092 Z\"urich, Switzerland; \texttt{\small \{hempel,lygeros\}@control.ee.ethz.ch}. Paul Goulart is with the Department of Engineering Science, University of Oxford, Parks Road, Oxford, OX1 3PJ, UK; \texttt{\small paul.goulart@eng.ox.ac.uk}.
}
}
\date{\today}                                           %
\begin{document}
\maketitle

\begin{abstract}
We present necessary and sufficient optimality conditions for finite time optimal control problems for a class of hybrid systems described by linear complementarity models. Although these optimal control problems are difficult in general due to the presence of complementarity constraints, we provide a set of structural assumptions  ensuring that the tangent cone of the constraints possesses geometric regularity properties. These imply that the classical Karush-Kuhn-Tucker conditions of nonlinear programming theory are both necessary and sufficient for local optimality, which is not the case for general mathematical programs with complementarity constraints.  We also present sufficient conditions for global optimality.

We proceed to show that the dynamics of every continuous piecewise affine system can be written as the optimizer of a mathematical program which results in a linear complementarity model satisfying our structural assumptions. Hence, our stationarity results apply to a large class of hybrid systems with piecewise affine dynamics. We present simulation results showing the substantial benefits possible from using a nonlinear programming approach to the optimal control problem with complementarity constraints instead of a more traditional mixed-integer formulation.
\end{abstract}

Many dynamical system have both continuous and switching, i.e.\ hybrid, dynamics~\cite{Goebel2009}. Common examples include mechanical systems with impact~\cite{Brogliato1999}, traction models for cars~\cite{Borrelli2006,DiCairano2010}, and biological systems~\cite{Mirollo1990}. The class of \emph{piecewise affine} (PWA) systems is particularly important because it is a natural generalization of linear systems and hence conceptually easy to understand and use for modeling~\cite{Sontag1981}.
It has been shown in~\cite{Heemels2001} that PWA systems are equivalent to a number of other hybrid system model classes such as \emph{linear complementarity} (LC) models~\cite{Heemels2003}, mixed-logical dynamical (MLD)~\cite{Bemporad1999}, and max-min-plus-scaling models~\cite{DeSchutter2004} under certain technical assumptions. Some tools and results developed for one system class may, accordingly, be applied to another, equivalent system class where appropriate. Finite horizon optimal control problems for discrete time hybrid system models have for example been studied in~\cite{Bemporad1999} (PWA / MLD models) and~\cite{DeSchutter2000,DeSchutter2001} (max-min-plus-scaling models).

Here we consider hybrid dynamical systems in LC form:
\begin{subequations}
\label{eq:LCModel}
\begin{align}
x^+ &= A x + B_u u + B_w w + c \label{eq:LCTransition}, \\
0 &\leq E_w w + E_x x + E_u u + e \enspace \perp \enspace w \geq 0, \label{eq:LCComplementarity}
\end{align}
\end{subequations}
where $x \in \R^{n_x}$ is the current system state and $x^+ \in \R^{n_x}$ the successor state resulting from applying a control input $u \in \R^{n_u}$.
Since both sides of \eqref{eq:LCComplementarity} are constrained to be non-negative, the orthogonality relation $\perp$ requires that every component of the \emph{complementarity variable} $w \in \R^{n_w}$ is constrained to be zero if the corresponding component on the left-hand side of~\eqref{eq:LCComplementarity} is non-zero, and vice-versa.
While the complementarity variables $w$ can by ascribed physical meaning with respect to the modeled system in some cases, see~\cite[Ch.\ 11]{Biegler2010},~\cite{Ferris1997}, or~\cite{Vasca2009}, we will consider the general case where they are treated as arbitrary auxiliary variables. The other matrices in~\eqref{eq:LCModel} are assumed to have compatible dimensions, in particular $E_w \in \R^{n_w \times n_w}$ is a square matrix.

For the hybrid system~\eqref{eq:LCModel} we consider finite horizon optimal control problems:
\begin{subequations}
\label{eq:LCCFTOC}
\begin{align}
\min_{\*u,\*x,\*w} \quad  &{\ell}_N(x_N) + \sum_{k=0}^{N-1} {\ell}_k(x_k,u_k) &\label{eq:LCMPCCost} \\
\text{s.t.} \quad %
&x_{k+1} = A x_k + B_u u_k + B_w w_k + c \label{eq:LCMPCDynamicAssignment}  \\
&0 \leq E_w w_k + E_x x_k + E_u u_k + e \enspace \perp \enspace w_k \geq 0 \label{eq:LCMPCComplementarity}
\end{align}
\end{subequations}
Here, $x_k \in \R^{n_x}$ denotes the \emph{predicted} system state $k$ timesteps into the future and the constraints have to be satisfied for all $k = 0, \dots, N-1$.
We denote by $\*u := (u_0, \dots, u_{N-1})$ the collection of the control inputs to be chosen, by $\*x := (x_0, \dots, x_N)$ the resulting state trajectory starting from the initial state $x_0 = x$, and by $\*w := (w_0, \dots, w_{N-1})$ the corresponding trajectory of complementarity variables. We assume the typical case of convex stage costs $\ell_k \colon \R^{n_x} \times \R^{n_u} \to \R$ and terminal cost $\ell_N \colon \R^{n_x} \to \R$ that are independent of $w_k$.
The constraints~\eqref{eq:LCMPCDynamicAssignment} and~\eqref{eq:LCMPCComplementarity} have to hold for all $k = 0, \dots, N-1$. The variables $w_k$ do not appear in the the cost function~\eqref{eq:LCMPCCost} because we consider them to be auxiliary variables without physical meaning.

We deliberately omit constraints on the state and control input from the optimal control problem~\eqref{eq:LCCFTOC} for clarity of exposition.  However, all of the results in this paper extend to linear state-input constraints over the horizon, i.e.\ $(x_k, u_k) \in \Gamma_k$ for polytopes $\Gamma_k \subseteq \R^{n_x} \times \R^{n_u}$.

Due to the complementarity constraints~\eqref{eq:LCMPCComplementarity} that are part of the LC dynamics, the optimal control problem~\eqref{eq:LCCFTOC} is called a \emph{mathematical program with complementarity constraints} (MPCC)~\cite{Luo1996}. For an overview of MPCC solution methods and different applications see~\cite{Biegler2010,Ralph2008}.
MPCCs are nonlinear and nonconvex optimization problems (i.e.\ the complementarity constraint~\eqref{eq:LCMPCComplementarity} can be modeled as a bilinear (in-)equality) and are generally difficult to solve. The main reason for this is that the classical Mangasarian-Fromovitz constraint qualification (MFCQ) of nonlinear programming (NLP) is violated at every feasible point~\cite{Chen1995}. MFCQ is typically assumed in NLP theory to enable the use of Karush-Kuhn-Tucker (KKT) conditions to characterize optimality. It is equivalent to the compactness of the set of Lagrange multipliers at a local optimum~\cite{Gauvin1977}.

In this paper we make a number of structural assumptions on~\eqref{eq:LCModel} to focus on a particular subclass of LC models. Under these assumptions we can prove strong stationarity conditions for the optimal control MPCC~\eqref{eq:LCCFTOC}. In particular, we will show that the classical KKT conditions from NLP theory are both necessary and sufficient for optimality in~\eqref{eq:LCCFTOC}. This is generally not the case for MPCCs where KKT conditions cannot be expected to hold and specialized solution methods are often used~\cite{Luo1996}.

We then show that our results for~\eqref{eq:LCCFTOC} cover optimal control problems for arbitrary continuous PWA systems. In particular, we exploit the fact that continuous PWA functions can be written as the difference of two convex PWA functions~\cite{Kripfganz1987,Hempel2013} to represent PWA system dynamics as the solution to a convex parametric quadratic program (PQP). Further manipulations result in a special LC model~\eqref{eq:LCModel} which satisfies the assumptions we require on~\eqref{eq:LCModel} for our strong stationarity results.
We also present sufficient conditions for a given local optimum to be the only local optimum within a neighborhood around it or even \emph{globally optimal} for~\eqref{eq:LCCFTOC}.

In addition to an illustrative example we present numerical results that indicate solving~\eqref{eq:LCCFTOC} as an NLP can be computationally advantageous  compared to more traditional mixed-integer approaches to solving optimal control problems for PWA systems~\cite{Bemporad1999}. Treating~\eqref{eq:LCCFTOC} as a general NLP instead of an MPCC is only possible due to the strong stationarity results in this paper since typical NLP algorithms attempt to find a solution to the KKT conditions which, as mentioned above, are not suitable optimality conditions for MPCCs. Solving~\eqref{eq:LCCFTOC} as an NLP leads to significantly shorter computation times and much better scaling in the problem dimensions than an MIP approach and often yields globally optimal solutions in numerical experiments. While no a-priori guarantees exist for this, simulation studies for randomly generated systems outline the possible benefits of this approach. Since optimal control problems for hybrid systems are NP hard in general no such guarantees should be expected~\cite{Daafouz2009}.

\textbf{Notation:}
We use superscripts in square brackets to indicate elements of vectors and vector valued functions and matrices, e.g.\ $f^{[j]}$ is the $j^{\text{th}}$ element of $f$ and $M^{[j,:]}$ is the $j^{\text{th}}$ row of $M$. We use the symbols $\wedge$ and $\vee$ to denote the logical operations ``and'' and ``or'', respectively. The Kronecker product between two matrices $A$ and $B$ is denoted $A \otimes B$. The notation $x \perp y$ indicates that the two vector $x,y \in \R^n$ are orthogonal, i.e.\ $x^\top y = 0$.
A bold~$\one$ denotes a vector (or matrix) of ones and $I$ an identity matrix. In case the dimensions are not clear from context we indicate them using subscripts, e.g.\ $\one_{n \times m} \in \R^{n \times m}$ is an $n$-by-$m$ matrix of ones. A positive \mbox{(semi-)}definiteness condition on a symmetric matrix $M=M^\top$ is denoted $M\succ0$ ($M \succeq 0$).
While $\lVert \cdot \rVert$ denotes an arbitrary norm on the respective space, we denote a weighted Euclidean norm using a symmetric matrix $Q \succ 0$ as $\lVert x \rVert_Q := \sqrt{x^\top Q x}$. %
The sign of a scalar $x$ is given by $\sign(x)$.

The  nullspace of a matrix $M$ is given by  $\nullspace(M)$.
The interior of a set $\Omega \subseteq \R^n$ is denoted $\int(\Omega)$ and the cardinality of a set $\gamma \subseteq \N$ by $\lvert \gamma \rvert$. We denote the set of real non-negative numbers $\R_+$ and the index set $\{1, \dots, n\}$ by $\Nn{n}$. All vector-valued inequalities are to be understood component-wise.

The term \emph{mathematical program with equilibrium constraints} (MPEC) is also used regularly in the literature for MPCCs such as~\eqref{eq:LCCFTOC}, although the two problem classes are not entirely equivalent~\cite{Dempe2012}. We will use the term \emph{MPCC} throughout the paper, except for some technical terms, e.g.\ MPEC-Abadie constraint qualification, which have been firmly established in the literature using the MPEC-acronym.

\section{Standing assumptions and main result}
\label{sec:Problem}

Throughout the paper we make  a number of standing assumptions about the LC model~\eqref{eq:LCModel}. The first is a reasonable assumption to guarantee deterministic behavior:
\begin{ass} \label{ass:WellPosedness}
Every complementarity variable $w$ that solves~\eqref{eq:LCComplementarity} for a fixed $(x,u)$ results in the same successor state $x^+$.
\end{ass}
For an in-depth discussion of well-posedness of LC systems and related results we refer to~\cite{Camlibel2001}.

The next is a technical assumption on the structure of the complementarity problem~\eqref{eq:LCComplementarity}:
\begin{ass} \label{ass:DecomposablePSDRankOne}
There exist an $n_b \in \N$ and scalars $n_{w_1}, \dots, n_{w_{n_b}} \in \N$ that partition $\Nn{n_w}$ such that the complementarity problem~\eqref{eq:LCComplementarity} can (possibly after a coordinate transformation) be decomposed into $n_b$ independent complementarity problems
\begin{equation} \label{eq:SubComplementarityProblem}
\forall i \in \Nn{n_b} \colon \quad
0 \leq M_i {w}_i + C_i x + D_i u + e_i \enspace \perp \enspace {w}_i \geq 0,
\end{equation}
where $M_i = M_i^\top \in \R^{n_{w_i} \times n_{w_i}}$ is a symmetric positive semidefinite rank-one matrix, i.e.
\[
\forall i \in \Nn{n_b} \colon \quad M_i = m_i m_i^\top \succeq 0
\]
for some $m_i \in \R^{n_{w_i}}$. This decomposition is minimal in the sense that $m_i^{[j]} \neq 0$ for every $i \in \Nn{n_b}$ and $j \in \Nn{n_{w_i}}$.
\end{ass}

Assumption~\ref{ass:DecomposablePSDRankOne} means that ${E}_w = {E}_w^\top \in \R^{n_w \times n_w}$ is a block-diagonal matrix with the rank one matrices $M_i$ from~\eqref{eq:SubComplementarityProblem} along the diagonal
\begin{equation*}
{E}_w = \begin{pmatrix}
M_{1} & 0 & \dots & 0 \\
0 & M_{2} & \dots & 0 \\
\vdots & & \ddots & \vdots \\
0 & 0 & \dots & M_{{n_b}}
\end{pmatrix} \text{ with }
w = \begin{pmatrix} w_1 \\ \vdots \\ w_{n_b} \end{pmatrix},
\end{equation*}
and the other matrices in~\eqref{eq:LCComplementarity} are given as
\begin{align*}
{E}_x &= \begin{pmatrix}
C_{1} \\
\vdots \\
C_{{n_b}}
\end{pmatrix}, &
{E}_u &= \begin{pmatrix}
D_{1} \\
\vdots \\
D_{{n_b}}
\end{pmatrix}, &
{e} &= \begin{pmatrix}
e_{1} \\
\vdots \\
e_{{n_b}}
\end{pmatrix}.
\end{align*}
The solution set of the complementarity problem~\eqref{eq:LCComplementarity} is simply the cartesian product of the solution sets of~\eqref{eq:SubComplementarityProblem}. %

The requirement that $m_i^{[j]}$ be nonzero is without loss of generality and serves to avoid the presence of spurious degenerate complementarity variables. Assume $m_i^{[j]} = 0$ and $C_i^{[j,:]} x + D_i^{[j,:]} u + e_i^{[j]} = 0$ in~\eqref{eq:SubComplementarityProblem}, then $w_i^{[j]}$ is unbounded above and completely independent from all other complementarity constraints. This means its corresponding column in $B_w$ has to be identically zero to satisfy Assumption~\ref{ass:WellPosedness}. Hence, $w_i^{[j]}$ could simply be eliminated from the model.

Assumption~\ref{ass:DecomposablePSDRankOne} obviously limits what LC models~\eqref{eq:LCModel} we can consider, e.g.\ a consequence of Assumption~\ref{ass:DecomposablePSDRankOne} is that $E_w = E_w^\top \succeq 0$ which is generally not the case. We will prove in Section~\ref{sec:PWASystems} that every continuous PWA system can be written as an LC model in the form~\eqref{eq:LCModel} satisfying Assumption~\ref{ass:DecomposablePSDRankOne}. Hence, while we have limited the number of LC models~\eqref{eq:LCModel} we consider, we still cover all cases where equivalent continuous PWA dynamics exist.

It is easy to prove the following result which provides an easy procedure to verify whether certain LC models satisfy Assumption~\ref{ass:WellPosedness}:

\begin{lemma}[nullspace representation of Assumption~\ref{ass:WellPosedness}] \label{lem:NecessaryConditionWellPosedness}
An LC model~\eqref{eq:LCModel} with $E_w = E_w^\top \succeq 0$, e.g.\ one that satisfies Assumption~\ref{ass:DecomposablePSDRankOne}, satisfies Assumption~\ref{ass:WellPosedness} if
\[
\nullspace\left(E_w\right) \subseteq \nullspace\left( B_w \right).
\]
\end{lemma}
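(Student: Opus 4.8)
The goal is: given two solutions $w, w'$ of the complementarity problem (1b) for the same $(x,u)$, show $B_w w = B_w w'$, so that the successor states from (1a) coincide. Equivalently, I want to show $w - w' \in \nullspace(B_w)$, and by hypothesis it suffices to show $w - w' \in \nullspace(E_w)$.

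The plan is to exploit positive semidefiniteness of $E_w = E_w^\top \succeq 0$ together with the complementarity structure. Write $z := E_w w + E_x x + E_u u + e \geq 0$ and $z' := E_w w' + E_x x + E_u u + e \geq 0$, with $w \geq 0$, $w' \geq 0$, $w^\top z = 0$, $(w')^\top z' = 0$. Subtracting, $z - z' = E_w(w - w')$. Consider the inner product $(w - w')^\top E_w (w - w') = (w - w')^\top (z - z')$. Expanding the right-hand side gives $w^\top z - w^\top z' - (w')^\top z + (w')^\top z'$. The two diagonal terms vanish by complementarity, and the two cross terms $w^\top z'$ and $(w')^\top z$ are each nonnegative because they are inner products of nonnegative vectors. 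Hence $(w - w')^\top E_w (w - w') = -w^\top z' - (w')^\top z \leq 0$. But $E_w \succeq 0$ forces this quadratic form to be nonnegative, so it is zero.

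From $(w-w')^\top E_w (w-w') = 0$ and $E_w \succeq 0$, a standard fact (factor $E_w = L^\top L$, so $\|L(w-w')\|^2 = 0$ implies $L(w-w')=0$ implies $E_w(w-w') = L^\top L (w-w') = 0$) gives $w - w' \in \nullspace(E_w)$. Invoking the hypothesis $\nullspace(E_w) \subseteq \nullspace(B_w)$ yields $B_w(w-w') = 0$, i.e.\ $B_w w = B_w w'$, and therefore both solutions produce the same $x^+$ in (1a). Since $w, w'$ were arbitrary solutions for the fixed $(x,u)$, Assumption~\ref{ass:WellPosedness} holds.

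I do not anticipate a serious obstacle here — the lemma is genuinely easy, as the text remarks. The one point requiring a little care is the sign bookkeeping in the expansion of $(w-w')^\top(z-z')$: one must correctly identify which terms are the (vanishing) complementarity terms and which are the (nonnegative) cross terms, and then reconcile the resulting non-positivity with the non-negativity coming from $E_w \succeq 0$ to conclude the form vanishes. Everything else — the factorization argument to pass from a vanishing quadratic form to membership in the nullspace, and the final application of the nullspace inclusion — is routine linear algebra.
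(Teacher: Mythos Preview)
Your proof is correct and is essentially the same approach as the paper's, which simply cites \cite[Thm.~3.1.7]{Cottle1992}; your argument is precisely the standard proof of (the relevant part of) that theorem, showing that any two solutions of a symmetric positive semidefinite LCP differ by an element of $\nullspace(E_w)$. The only difference is that the paper outsources the argument to a reference while you give it in full.
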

\begin{proof}
Use~\cite[Thm.\ 3.1.7]{Cottle1992}.
\end{proof}

Finally, we make an assumption on the existence of special solutions for the complementarity sub-problems~\eqref{eq:SubComplementarityProblem}:

\begin{ass} \label{ass:NontrivialSolutions}
For every fixed $x \in \R^{n_x}$ and $u \in \R^{n_u}$ and every sub-problem~\eqref{eq:SubComplementarityProblem} with $i \in \Nn{n_b}$ there exists a $j \in \Nn{n_{w_i}}$ such that
\[
C_i^{[j,:]} x + D_i^{[j,:]}u + e_i^{[j]} < 0.
\]
\end{ass}
This assumption ensures that for any $(x,u)$ the solution to~\eqref{eq:SubComplementarityProblem} is non-trivial, i.e.\ $w_i \neq 0$.
Similarly to Assumption~\ref{ass:DecomposablePSDRankOne}, Assumption~\ref{ass:NontrivialSolutions} restricts the LC models~\eqref{eq:LCModel} considered but will be satisfied by LC models obtained from an arbitrary continuous PWA model via our construction described in Section~\ref{sec:PWASystems}.

Failure of MFCQ for~\eqref{eq:LCCFTOC} means that the set of Lagrange multipliers is either unbounded (which can be a numerical problem) or may even be empty at a local optimum. This means it is generally not possible to characterize optimality for MPCCs using the classical KKT conditions. The main result of this paper relieves this problem for the particular optimal control MPCC~\eqref{eq:LCCFTOC}:

\begin{theorem}[strong stationarity conditions] \label{thm:OptimalityConditions}
Let $\*v^* := (\*x^*, \*u^*, \*w^*)$ be feasible for the optimal control problem~\eqref{eq:LCCFTOC} for an LC model satisfying Assumptions~\ref{ass:WellPosedness},~\ref{ass:DecomposablePSDRankOne}, and~\ref{ass:NontrivialSolutions}. Then $\*v^*$ is locally optimal if and only if the classical KKT conditions in the sense of~\cite[Sec. 5.5.3]{Boyd2005},~\cite[Sec. 5.1]{Bertsekas2003} for~\eqref{eq:LCCFTOC} admit a primal-dual solution pair.
\end{theorem}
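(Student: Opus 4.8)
The plan is to reduce local optimality of~\eqref{eq:LCCFTOC} to a purely first-order condition on the tangent cone of its feasible set, and then to show that Assumptions~\ref{ass:WellPosedness}--\ref{ass:NontrivialSolutions} force a constraint qualification under which this condition is equivalent to solvability of the classical KKT system.

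First, let $\mathcal{F}$ be the feasible set of~\eqref{eq:LCCFTOC} and $f$ its (convex, and taken differentiable so that the KKT conditions can be stated) objective, seen as a function of $\*v=(\*x,\*u,\*w)$. By Assumption~\ref{ass:DecomposablePSDRankOne} the complementarity constraints~\eqref{eq:LCMPCComplementarity} decouple, over the horizon and over the blocks $i\in\Nn{n_b}$, into scalar relations $0\le g\perp w\ge0$; choosing for each such relation which factor is forced to zero turns all constraints into linear ones, so $\mathcal{F}$ is a finite union of polyhedra. Near a feasible point $\*v^*$, classify each scalar complementarity relation as active on the $w$-side ($w^*=0<g^*$), active on the $g$-side ($g^*=0<w^*$), or bi-active ($g^*=w^*=0$): for a small enough neighbourhood of $\*v^*$, every feasible point takes on each non-bi-active relation the side dictated by $\*v^*$, while on a bi-active relation either side is possible. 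Hence locally $\mathcal{F}=\bigcup_{s}P_s$, where $P_1,\dots,P_r$ are the finitely many polyhedra obtained by resolving the bi-active relations in all ways; each $P_s\subseteq\mathcal{F}$, $\*v^*\in P_s$, and $T_{\mathcal{F}}(\*v^*)=\bigcup_s T_{P_s}(\*v^*)$.

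Because every $P_s$ is a convex polyhedron containing $\*v^*$, $P_s\subseteq\mathcal{F}$, and $\mathcal{F}$ agrees with $\bigcup_s P_s$ around $\*v^*$, the point $\*v^*$ is locally optimal for~\eqref{eq:LCCFTOC} iff it minimises $f$ over every $P_s$; and since $f$ is convex this holds for a given $s$ iff $-\nabla f(\*v^*)$ lies in the polar of the polyhedral cone $L_{P_s}(\*v^*)=T_{P_s}(\*v^*)$. Intersecting over $s$ and using $\bigcap_s L_{P_s}^\circ=\bigl(\sum_s L_{P_s}\bigr)^\circ$, local optimality of $\*v^*$ is equivalent to $-\nabla f(\*v^*)\in\bigl(\sum_s L_{P_s}(\*v^*)\bigr)^\circ$. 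On the other hand, writing~\eqref{eq:LCCFTOC} as an ordinary NLP with the complementarity encoded through $g\ge0$, $w\ge0$, $g^\top w\le0$ and denoting by $L(\*v^*)$ its linearised cone at $\*v^*$, solvability of the classical KKT system at $\*v^*$ is, by the Farkas lemma, equivalent to $-\nabla f(\*v^*)\in L(\*v^*)^\circ$. Thus the theorem reduces to the single geometric identity $\sum_s L_{P_s}(\*v^*)=L(\*v^*)$ — a Guignard-type constraint qualification for~\eqref{eq:LCCFTOC} at $\*v^*$ — and establishing it is where I expect essentially all of the work to lie.

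The inclusion $\sum_s L_{P_s}(\*v^*)\subseteq L(\*v^*)$ is clear, since each $L_{P_s}(\*v^*)\subseteq L(\*v^*)$ and $L(\*v^*)$ is a convex cone. For the reverse inclusion, spell out $L(\*v^*)$: writing $g$ for the left-hand side of~\eqref{eq:LCMPCComplementarity} and $dg$ for its differential, a direction $d=(d\*x,d\*u,d\*w)$ lies in $L(\*v^*)$ iff it satisfies the linearised dynamics $dx_{k+1}=A\,dx_k+B_u\,du_k+B_w\,dw_k$ and, for every scalar complementarity relation $l$, the linearised active-set conditions of $\*v^*$ — which on a bi-active relation reduce merely to $dw_l\ge0$ and $dg_l\ge0$ instead of one of them vanishing. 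One has to show that any such $d$ is a sum of directions each of which, additionally, sets one of $dw_l$, $dg_l$ to zero on every bi-active relation while still obeying the linearised dynamics. Here the rank-one block structure $M_i=m_im_i^\top$ of Assumption~\ref{ass:DecomposablePSDRankOne} — which makes $dg_i$ equal to $C_i\,dx+D_i\,du$ plus the rank-one term $m_i(m_i^\top dw_i)$ — together with Assumption~\ref{ass:NontrivialSolutions}, which (holding on a neighbourhood of $(x^*,u^*)$) pins the sign of $m_i^\top w_i$ and restricts which components of each $w_i$ can be simultaneously positive, hence the shape of the bi-active set inside each block, is what should rule out the configuration $dw_l>0$, $dg_l>0$ at a bi-active $l$ responsible for the generic failure of constraint qualifications in MPCCs, and should make the decomposition compatible with the coupling that $B_w$ introduces through the dynamics; Assumption~\ref{ass:WellPosedness} enters both to make $f$ well defined along $\mathcal{F}$ and to ensure that the components of $dw$ left free by the linearised complementarity do not reach the linearised state, so the decomposition preserves dynamic feasibility. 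Once $\sum_s L_{P_s}(\*v^*)=L(\*v^*)$ is proved, the two characterisations of $-\nabla f(\*v^*)$ above coincide and the theorem follows; note that convexity of the stage and terminal costs is used only for the converse (sufficiency) direction of the reduction.
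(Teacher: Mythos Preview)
Your reduction is correct and lands on exactly the same target as the paper: the identity $\sum_s L_{P_s}(\*v^*)=L(\*v^*)$ is, after taking polars, precisely the intersection property $\bigl(\mathcal{T}^{lin}_{MPEC}(\*v^*)\bigr)^\circ=\bigl(\mathcal{T}^{lin}(\*v^*)\bigr)^\circ$ of Lemma~\ref{lem:IntersectionProperty}, which combined with the MPEC-Abadie CQ (automatic for affine data) gives Guignard CQ and hence KKT necessity; sufficiency via convexity of the stage costs is also the paper's route (through~\cite[Thm.~2.3]{Ye2005} and~\cite[Prop.~4.2]{Flegel2005a}). So the architecture matches.

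The gap is that you never actually prove the key identity; your final paragraph is speculation, and two of its intuitions are off. First, the configuration $dw_l>0,\,dg_l>0$ at a bi-active $l$ is \emph{not} ruled out in $L(\*v^*)=\mathcal{T}^{lin}(\*v^*)$ --- it is exactly what makes $L$ strictly larger than each $L_{P_s}$; the task is to show such a direction lies in the \emph{sum} of the branch cones, not to exclude it. Second, Assumption~\ref{ass:NontrivialSolutions} is not used to ``restrict the shape of the bi-active set'': its role is to guarantee that within every rank-one block the index set $\alpha$ is nonempty. The paper's concrete mechanism, which your sketch is missing, is Lemma~\ref{lem:InteriorTangents} feeding Lemma~\ref{lem:IntersectionProperty}: for each bi-active index $j$ one builds a direction $d$ that is \emph{zero in all $x$- and $u$-components} and supported on only two $w$-entries in the same block --- the bi-active $j$ and some $i\in\alpha$ --- chosen so that $m^\top\delta=0$ (possible because $m$ is entrywise nonzero by Assumption~\ref{ass:DecomposablePSDRankOne} and $\alpha\neq\emptyset$ by Assumption~\ref{ass:NontrivialSolutions}). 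Then $\delta\in\nullspace(M)\subseteq\nullspace(B_w)$ by Lemma~\ref{lem:NecessaryConditionWellPosedness} (this is where Assumption~\ref{ass:WellPosedness} enters), so the linearised dynamics hold with $d\*x=d\*u=0$ and the construction decouples across time steps and blocks. This family of recession vectors is Pang's condition~$\mathbf{(A_H)}$~\cite{Pang1999}, which yields the intersection property without ever decomposing an arbitrary $d\in L$ by hand.
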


We will need a number of intermediate results to prove Theorem~\ref{thm:OptimalityConditions}, so defer the proof to Section~\ref{sec:MPC}. Theorem~\ref{thm:OptimalityConditions} is remarkable since it states that the classical KKT conditions are both necessary and sufficient local optimality conditions for the optimal control MPCC~\eqref{eq:LCCFTOC}.   This is in contrast to general MPCCs, for which the KKT conditions are sufficient for optimality, but typically not necessary~\cite[Ex.\ 3]{Scheel2000}.
We will present the KKT conditions from~\cite[Sec. 5.5.3]{Boyd2005},~\cite[Sec. 5.1]{Bertsekas2003} for~\eqref{eq:LCCFTOC} in a concise manner in Section~\ref{sec:MPC}.

\section{Structure of the Linear Complementarity Solutions}
\label{sec:LCP}

For the proof of Theorem~\ref{thm:OptimalityConditions} we will need a number of results that characterize the solution set of the complementarity constraint~\eqref{eq:LCMPCComplementarity} under the assumptions made in the previous section.

From Assumption~\ref{ass:DecomposablePSDRankOne} it is clear that we can focus on an individual complementarity sub-problem~\eqref{eq:SubComplementarityProblem}. Furthermore, for the structural results we derive below the actual values of the state $x$ and control input $u$ are of no consequence. Hence, to simplify the notation throughout this section we will consider the linear complementarity problem (LCP)
\begin{equation} \label{eq:LCP}
0 \leq M w + q \enspace \perp \enspace w \geq 0,
\end{equation}
where $w \in \R^{n}$, $M \in \R^{n \times n}$, and $q \in \R^{n}$. Note that we drop the sub-problem index $i$ from~\eqref{eq:SubComplementarityProblem} and assume $x$ and $u$ fixed to obtain
\[
q := C_i x + D_i u + e_i.
\]
Moreover by Assumptions~\ref{ass:DecomposablePSDRankOne} and~\ref{ass:NontrivialSolutions}
\begin{equation*}
\exists j \in \Nn{n} \colon \enspace q^{[j]} < 0 \quad \text{and} \quad M = m m^\top \succeq 0 %
\end{equation*}
for some $m \in \R^n$. %

Following the conventions of~\cite{Cottle1992}, we call the solution set of the LCP~\eqref{eq:LCP} for a fixed~$M$ and~$q$
\ifTwoColumn
\begin{align*}
\sol{q}{M} := \{ w \in \R^n \mid &\enspace  w \geq 0, \enspace Mw + q \geq 0, \\
	&\enspace w^\top \left(Mw + q \right) = 0 \}.
\end{align*}
\else
\[
\sol{q}{M} := \left\{ w \in \R^n \bigmid w \geq 0, \enspace Mw + q \geq 0, \enspace w^\top \left(Mw + q \right) = 0 \right\}.
\]
\fi
From $M = M^\top \succeq 0$ it follows that $\sol{q}{M}$ is a convex polytope:

\begin{lemma}[LCP solution set] \label{lem:PolyhedralSolutionSet}
The solution set for the complementarity problem~\eqref{eq:LCP} is the convex polytope
\[
\sol{q}{M} = \{ w \in \R^n_+ \mid  q^\top (w - \bar w) = 0, \; m^\top (w - \bar w) = 0 \}
\]
where $\bar w$ is any solution of~\eqref{eq:LCP}. %
\end{lemma}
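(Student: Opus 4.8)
The plan is to establish the set equality by proving the two inclusions; convexity and polyhedrality of the right-hand side are then immediate, since $\{\,w \in \R^n_+ \mid q^\top(w-\bar w)=0,\ m^\top(w-\bar w)=0\,\}$ is the intersection of the nonnegative orthant with two hyperplanes. Throughout, $\bar w$ denotes a fixed but arbitrary solution of~\eqref{eq:LCP}. Only two properties of $M$ are used: $M = M^\top \succeq 0$ and the rank-one factorization $M = m m^\top$. In particular, since $m \neq 0$ (because $m^{[j]}\neq 0$ for all $j$), for any $d \in \R^n$ we have $Md = m\,(m^\top d) = 0$ if and only if $m^\top d = 0$, so $\nullspace(M) = \nullspace(m^\top)$.

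For the inclusion ``$\subseteq$'', let $w \in \sol{q}{M}$. The cross-complementarity estimate for monotone LCPs applies: from $w \geq 0$, $Mw + q \geq 0$, $\bar w \geq 0$, $M\bar w + q \geq 0$ we get $w^\top(M\bar w + q) \geq 0$ and $\bar w^\top(Mw + q) \geq 0$; adding these, using $w^\top(Mw+q) = \bar w^\top(M\bar w + q) = 0$ and the symmetry of $M$, collapses the sum to $-(w-\bar w)^\top M(w-\bar w) \geq 0$. Since $M \succeq 0$ this forces $(w-\bar w)^\top M(w-\bar w) = 0$, i.e.\ $\bigl(m^\top(w-\bar w)\bigr)^2 = 0$, hence $m^\top(w-\bar w) = 0$ and $M(w - \bar w) = 0$. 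Substituting back into $w^\top(Mw+q) = \bar w^\top(M\bar w+q) = 0$ and using $w^\top M w = (m^\top w)^2 = (m^\top \bar w)^2 = \bar w^\top M \bar w$ then gives $q^\top w = q^\top \bar w$, so $w$ lies in the right-hand set.

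For the inclusion ``$\supseteq$'', take $w \in \R^n_+$ with $m^\top(w-\bar w) = 0$ and $q^\top(w - \bar w) = 0$. The rank-one structure makes $Mw$ depend on $w$ only through $m^\top w$: $Mw = m\,(m^\top w) = m\,(m^\top \bar w) = M\bar w$, so $Mw + q = M\bar w + q \geq 0$, which disposes of the inequality constraint. For complementarity, $w^\top(Mw + q) = w^\top(M\bar w + q) = (m^\top w)(m^\top \bar w) + q^\top w = (m^\top \bar w)^2 + q^\top \bar w = \bar w^\top(M\bar w + q) = 0$. Together with $w \geq 0$ this shows $w \in \sol{q}{M}$, completing the proof.

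The only delicate point is the first inclusion, specifically the passage to $(w-\bar w)^\top M(w-\bar w) = 0$; this is exactly the structure theorem for solution sets of monotone LCPs (cf.\ \cite[Thm.\ 3.1.7]{Cottle1992}), so an alternative to the hands-on computation above is to invoke that result to write $\sol{q}{M} = \{\,w \geq 0 \mid Mw+q \geq 0,\ (M+M^\top)w = (M+M^\top)\bar w,\ q^\top w = q^\top \bar w\,\}$ and then simplify: $(M+M^\top)w = (M+M^\top)\bar w$ reduces to $m^\top w = m^\top \bar w$ because $M = M^\top = mm^\top$ and $m \neq 0$, after which $Mw + q = M\bar w + q$ and the explicit constraint $Mw + q \geq 0$ is redundant. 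I do not expect any real obstacle beyond keeping the rank-one reductions straight.
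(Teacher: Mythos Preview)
Your proof is correct and, at its core, follows the same route as the paper: the paper simply invokes \cite[Thm.~3.1.7]{Cottle1992} for monotone LCPs and then simplifies using $\nullspace(M)=\nullspace(m^\top)$, which is exactly the alternative you spell out in your final paragraph. The difference is only in presentation: you additionally give a self-contained derivation of both inclusions (essentially reproving the needed part of Cottle's theorem for the rank-one case), whereas the paper is content with the citation plus the nullspace observation.
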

\begin{proof}
The fact that $\sol{q}{M}$ is a convex polytope follows from $M \succeq 0$ and~\cite[Thm.\ 3.1.7]{Cottle1992}. We can simplify the notation from \cite{Cottle1992} by noting that $\nullspace(M) = \nullspace\left(m^\top\right)$.
\end{proof}
An immediate consequence of Lemma~\ref{lem:PolyhedralSolutionSet} is that the left-hand term $(Mw + q)$ in~\eqref{eq:LCP} has the same value for every $w \in \sol{q}{M}$~\cite[Thm.\ 3.4.4]{Cottle1992}. %

For a fixed $w \in \sol{q}{M}$ we introduce the  index sets
\begin{equation}
\begin{aligned}
\alpha(w) &:= \{i \in \Nn{n} \mid M^{[i,:]} w + q^{[i]} = 0, \enspace w^{[i]} > 0 \},\\
\beta(w)  &:= \{i \in \Nn{n} \mid M^{[i,:]} w + q^{[i]} = 0, \enspace w^{[i]} = 0 \},\\
\gamma(w) &:= \{i \in \Nn{n} \mid M^{[i,:]} w + q^{[i]} > 0, \enspace w^{[i]} = 0 \}, \\
\end{aligned} \label{eq:LCPComplementarityIndexSets}
\end{equation}
which partition $\Nn{n}$. Complementarity constraints for which $i \in \beta(w)$ are called \emph{biactive}, and the set $\beta(w)$ is the \emph{biactive set}. In case $\beta(w)$ is empty, $w$ is called \emph{nondegenerate}~\cite{Cottle1992}.

\begin{lemma}[nondegenerate tangent vectors] \label{lem:InteriorTangents}
Let $\bar w \in \sol{q}{M}$ solve the LCP~\eqref{eq:LCP} with
\[
\beta(\bar w) \neq \emptyset,
\]
i.e.\ assume there is at least one biactive complementarity constraint. For every $j \in \beta(\bar w)$ there exists another $w \in \sol{q}{M}$ such that
\begin{align*}
\alpha(w) = \alpha(\bar w) \cup \{ j \}
\enspace \text{and} \enspace \beta( w) = \beta(\bar w) \setminus \{ j \}.
\end{align*}
\end{lemma}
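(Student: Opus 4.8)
The plan is to construct the required solution $w$ explicitly by moving away from $\bar w$ along a direction supported on only two coordinates, namely $j$ itself and one fixed companion index $k \in \alpha(\bar w)$. Two facts underpin the argument. First, by the remark following Lemma~\ref{lem:PolyhedralSolutionSet}, the residual $Mw+q$ is the same vector for every $w \in \sol{q}{M}$; hence $(Mw+q)^{[i]} = 0$ for all $i \in \alpha(\bar w) \cup \beta(\bar w)$ and $(Mw+q)^{[i]} > 0$ for all $i \in \gamma(\bar w)$, \emph{independently} of which solution $w$ is chosen. Second, Assumption~\ref{ass:NontrivialSolutions} forces $\bar w \neq 0$, so $\alpha(\bar w) \neq \emptyset$ and we may fix some $k \in \alpha(\bar w)$; note $k \neq j$ since $\alpha(\bar w)$ and $\beta(\bar w)$ are disjoint, and $m^{[j]} \neq 0 \neq m^{[k]}$ by the minimality clause of Assumption~\ref{ass:DecomposablePSDRankOne}.

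Next I would define the direction $d \in \R^n$ by $d^{[j]} = 1$, $d^{[k]} = -m^{[j]}/m^{[k]}$, and $d^{[i]} = 0$ for all other $i$, and set $w := \bar w + t d$ with $t > 0$. Writing $\rho := m^\top \bar w$, biactivity of $j$ gives $q^{[j]} = -m^{[j]} \rho$ and activity of $k$ gives $q^{[k]} = -m^{[k]} \rho$; a one-line computation then yields $m^\top d = 0$ and $q^\top d = 0$, so $w$ satisfies the two linear equalities in the polyhedral description of $\sol{q}{M}$ from Lemma~\ref{lem:PolyhedralSolutionSet} (taking $\bar w$ as the reference solution there). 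Since $\bar w^{[k]} > 0$ and only the $j$-th and $k$-th entries of $\bar w$ are perturbed, $w \geq 0$ for all sufficiently small $t > 0$, and therefore $w \in \sol{q}{M}$.

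It then remains to read off the index sets of $w$. By the first fact above, every index keeps its membership in the dichotomy $(Mw+q)^{[i]} = 0$ versus $(Mw+q)^{[i]} > 0$ when passing from $\bar w$ to $w$. The entries of $w$ differ from those of $\bar w$ only at $j$, where $w^{[j]} = t > 0$, and at $k$, where $w^{[k]}$ stays strictly positive for small $t$; every other entry, and in particular every entry indexed by $\beta(\bar w) \setminus \{ j \}$ or by $\gamma(\bar w)$, is unchanged. Hence $j$ moves from $\beta(\bar w)$ into $\alpha$, $k$ remains in $\alpha$, and all remaining indices retain their labels, which is precisely $\alpha(w) = \alpha(\bar w) \cup \{ j \}$ and $\beta(w) = \beta(\bar w) \setminus \{ j \}$.

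The only genuinely delicate point is the availability of a free companion index $k \neq j$ whose coordinate can absorb the perturbation of $w^{[j]}$ while both equalities $q^\top(w - \bar w) = 0$ and $m^\top(w - \bar w) = 0$ remain satisfied. This is exactly where the structural assumptions are needed: Assumption~\ref{ass:NontrivialSolutions} ensures such a $k$ exists (via $\alpha(\bar w) \neq \emptyset$), and the rank-one form $M = m m^\top$ is what makes the two equality constraints compatible with a two-coordinate perturbation, because on the active indices $q$ is pinned to $-\rho\, m$. Absent these hypotheses the biactive index $j$ could be forced to zero across the entire solution set and the statement would fail, so I expect this to be the crux of the write-up.
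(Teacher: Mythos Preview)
Your proposal is correct and follows essentially the same approach as the paper: both proofs construct a two-coordinate perturbation supported on $j$ and a companion index from $\alpha(\bar w)$, use Assumption~\ref{ass:NontrivialSolutions} to guarantee the companion exists, exploit $q^{[i]} = -m^{[i]}(m^\top \bar w)$ on active indices to verify the nullspace conditions of Lemma~\ref{lem:PolyhedralSolutionSet}, and then check nonnegativity. The only cosmetic difference is that the paper writes down an explicit admissible step size rather than invoking ``sufficiently small $t$''.
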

\begin{proof}
We will construct a vector $\delta \in \Re^n$ such that $w = (\bar w + \delta)$ satisfies the conditions of the lemma.   From Lemma~\ref{lem:PolyhedralSolutionSet}, any such $\delta$ must satisfy $w = (\bar w + \delta) \ge 0$ and $\delta \in \nullspace \left([m\,\,q]^{\smash\top}\right)$ in addition to the conditions on $\alpha(w)$ and $\beta(w)$.

Assumption~\ref{ass:NontrivialSolutions} ensures that $\alpha(\bar w)$ is always nonempty. Select an arbitrary $i \in \alpha(\bar w)$ and ${j \in \beta(\bar w)}$, and restrict $\delta$ to be zero aside from the elements $\delta^{[i]}$ and $\delta^{[j]}$.  We will choose the non-zero entries of $\delta$ such that
\begin{equation}\label{eq:LCPFeasibleNonZeros}
 \left\lvert \delta^{[i]} \right\rvert < \bar w^{[i]} \,\,\,  \text{and} \,\,\, \delta^{[j]} > 0,
\end{equation}
which is sufficient to ensure both the non-negativity of $w = \bar w + \delta$ and the required conditions on $\alpha(w)$ and $\beta(w)$.

Note that $q^{[i]} = -m^{[i]} (m^\top \bar w)$ (analogously for \smash{$q^{[j]}$}),
so that the nullspace condition on $\delta$ amounts to
\begin{equation}
\bm
m^{[i]} & m^{[j]} \\
-m^{[i]} \left( m^\top \bar w\right) &-m^{[j]} \left(m^\top \bar w\right)
\bmend%
\bm \delta^{[i]} \\ \delta^{[j]} \bmend = 0. \label{eq:LCPFeasibleDirectionSmallSystem}
\end{equation}

Recalling that Assumption~\ref{ass:DecomposablePSDRankOne} ensures that the vector $m$ is element-wise non-zero, it is then sufficient to choose
\[
\delta^{[i]} = -\frac{1}{2}\bar w^{[i]}\cdot \sign\left(\frac{m^{[i]}}{m^{[j]}}\right), \quad
\delta ^{[j]}= -\frac{m^{[i]}}{m^{[j]}}\delta^{{[i]}},
\]
which satisfies both \eqref{eq:LCPFeasibleNonZeros} and \eqref{eq:LCPFeasibleDirectionSmallSystem}.
\end{proof}

Lemma~\ref{lem:InteriorTangents} proves that in the presence of biactive complementarity constraints it is possible to find a tangential direction to $\sol{q}{M}$ such that exactly one index moves from $\beta$ to $\alpha$. This technical result is necessary to later prove our results on the tangent cone of the constraints in~\eqref{eq:LCCFTOC}.  Repeated application of this result provides the following corollary:

\begin{corollary}[strict complementarity] \label{cor:StrictComplementarity}
If $\sol{q}{M}$ is non-empty, then there exists a ${w \in \sol{q}{M}}$ with $\beta(w) = \emptyset$.
\end{corollary}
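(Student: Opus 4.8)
The plan is to obtain the desired nondegenerate solution by starting from an arbitrary solution of the LCP and repeatedly invoking Lemma~\ref{lem:InteriorTangents} to strip indices out of the biactive set one at a time. Concretely, first I would note that since $\sol{q}{M}$ is assumed non-empty, there is at least one element $w_0 \in \sol{q}{M}$. If $\beta(w_0) = \emptyset$ we are done, so assume $\beta(w_0) \neq \emptyset$. Pick any $j_1 \in \beta(w_0)$; Lemma~\ref{lem:InteriorTangents} furnishes a $w_1 \in \sol{q}{M}$ with $\alpha(w_1) = \alpha(w_0) \cup \{j_1\}$ and $\beta(w_1) = \beta(w_0) \setminus \{j_1\}$. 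This strictly decreases the cardinality of the biactive set by exactly one while leaving $w_1$ a genuine solution of the LCP.

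Next I would set up the induction formally: define a finite sequence $w_0, w_1, w_2, \dots$ of elements of $\sol{q}{M}$ such that $\lvert \beta(w_{k+1}) \rvert = \lvert \beta(w_k) \rvert - 1$ as long as $\beta(w_k) \neq \emptyset$. Since $\lvert \beta(w_0) \rvert \le n$ is finite and each application of Lemma~\ref{lem:InteriorTangents} decrements it by one, after at most $\lvert \beta(w_0) \rvert$ steps we reach some $w_K \in \sol{q}{M}$ with $\beta(w_K) = \emptyset$, which is by definition a nondegenerate solution. This is the $w$ asserted by the corollary.

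The one point that requires a little care — and which I'd expect to be the only real obstacle — is checking that the hypotheses of Lemma~\ref{lem:InteriorTangents} remain satisfied at every step of the iteration, i.e.\ that $w_k \in \sol{q}{M}$ at each stage (so that the lemma is applicable again) and that $\beta(w_k) \neq \emptyset$ whenever we still need to apply it. The first is immediate because Lemma~\ref{lem:InteriorTangents} explicitly returns an element of $\sol{q}{M}$; the second is exactly the loop condition under which we invoke the lemma. I would also remark that Assumption~\ref{ass:NontrivialSolutions}, used inside Lemma~\ref{lem:InteriorTangents} to guarantee $\alpha(\bar w) \neq \emptyset$, continues to hold throughout since it concerns only the data $(M,q)$ and not the particular solution; hence no degeneracy of the construction can occur. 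Apart from this bookkeeping the argument is a routine finite descent, so the proof is short.
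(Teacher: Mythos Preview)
Your proposal is correct and is exactly the approach the paper takes: the corollary is stated as following by ``repeated application'' of Lemma~\ref{lem:InteriorTangents}, which is precisely the finite descent you describe. Your bookkeeping remarks (that each step returns an element of $\sol{q}{M}$ and that Assumption~\ref{ass:NontrivialSolutions} depends only on the data) are the only things worth noting, and they are handled correctly.
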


\section{Optimality Conditions for the Optimal Control Problem}
\label{sec:MPC}

In this section we will derive necessary and sufficient optimality conditions for the optimal control problem~\eqref{eq:LCCFTOC}.
The problem~\eqref{eq:LCCFTOC} is a specific case of the following general class of \emph{affine} MPCCs:
\begin{subequations}
\label{eq:MPCC}
\begin{align}
\min_v \quad &J(v)\quad   \label{eq:MPCCCost} \\
\text{s.t.} \quad &F_{in} v + f_{in} \leq 0, \quad F_{eq} v + f_{eq} = 0, \label{eq:MPCCLinearCon} \\
&\quad Gv + g \geq 0, \quad Hv + h \geq 0, \label{eq:MPCCPosCon} \\
&\quad\enspace(Gv + g)^\top (Hv + h) = 0 \label{eq:MPCCComplementarity}
\end{align}
\end{subequations}
Here, $v \in \R^{n_v}$ is the decision variable in~\eqref{eq:MPCC}, $f_{in} \in \R^p$, $f_{eq} \in \R^q$, $g,h \in \R^m$, and all other quantities have compatible dimension. For the optimal control problem~\eqref{eq:LCCFTOC} with $v := (\*u, \*x, \*w)$, we have
\begin{subequations}
\label{eq:ComplementarityMatricesLC}
\begin{align}
G &= \left( I_N \otimes E_u \enspace I_N \otimes E_x \enspace I_N \otimes E_w \right), &
g &=  \one_N \otimes e, \label{eq:ComplementarityMatricesLCG} \\
H &= \begin{pmatrix} 0 & 0 & I_{N n_w} \end{pmatrix}, &
h &= 0.\label{eq:ComplementarityMatricesLCH}
\end{align}
\end{subequations}
The other matrices can be constructed analogously. In the sequel we will require a number of concepts from the MPCC literature, and will make reference to the general affine MPCC~\eqref{eq:MPCC} to this end.
The optimization problem~\eqref{eq:MPCC} is nonconvex due to the complementarity constraint~\eqref{eq:MPCCComplementarity}. Note that there are many slightly different but equivalent formulations of~\eqref{eq:MPCC}, in particular regarding the complementarity constraint.
We will use the formulation~\eqref{eq:MPCC} throughout the paper, which is without loss of generality (see~\cite{Luo1996}).

\subsection{Preliminaries: stationarity conditions for MPCCs}
\label{subsec:MPCCStationarity}

We introduce a number of index sets relating to the active (complementarity) constraints for a given feasible $v^*$:
\ifTwoColumn
\begin{align}
\alpha &:= \{i \in \Nn{m} \mid G^{[i,:]} v^* + g^{[i]} = 0, \enspace H^{[i,:]} v^* + h^{[i]} > 0 \}, \nonumber\\
\beta  &:= \{i \in \Nn{m} \mid G^{[i,:]} v^* + g^{[i]} = 0, \enspace H^{[i,:]} v^* + h^{[i]} = 0 \},\nonumber\\
\gamma &:= \{i \in \Nn{m} \mid G^{[i,:]} v^* + g^{[i]} > 0, \enspace H^{[i,:]} v^* + h^{[i]} = 0 \}, \nonumber\\
\mathcal{I}_{in} &:= \{i \in \Nn{p} \mid F_{in}^{[i,:]} v^* + f_{in}^{[i]} = 0 \}.
\label{eq:ActiveComplementarityConstraints}
\end{align}
\else
\begin{equation}
\begin{aligned}
\alpha &:= \{i \in \Nn{m} \mid G^{[i,:]} v^* + g^{[i]} = 0, \enspace H^{[i,:]} v^* + h^{[i]} > 0 \},\\
\beta  &:= \{i \in \Nn{m} \mid G^{[i,:]} v^* + g^{[i]} = 0, \enspace H^{[i,:]} v^* + h^{[i]} = 0 \},\\
\gamma &:= \{i \in \Nn{m} \mid G^{[i,:]} v^* + g^{[i]} > 0, \enspace H^{[i,:]} v^* + h^{[i]} = 0 \}, \\
\mathcal{I}_{in} &:= \{i \in \Nn{p} \mid F_{in}^{[i,:]} v^* + f_{in}^{[i]} = 0 \}.
\end{aligned} \label{eq:ActiveComplementarityConstraints}
\end{equation}
\fi
Due to the analogy between these definitions and~\eqref{eq:LCPComplementarityIndexSets} the same letters are generally used in the literature to identify the singularly active and biactive complementarity constraints.
Note that we omit the dependence of these sets on the feasible $v^*$ in the interest of a simpler notation whenever it is unambiguous. %
When $\beta = \emptyset$ we say \emph{strict complementarity} holds~\cite{Luo1996}.

We define the Lagrangian function for~\eqref{eq:MPCC} according to~\cite{Boyd2005,Bertsekas2003} as
\ifTwoColumn
\begin{align*}
&\mathcal{L}(v, \hat \eta, \hat \mu, \hat \nu_G, \hat \nu_H, \hat \xi) := J(v) + \hat \eta^\top \left(F_{in} v + f_{in} \right) \\
&\quad + \hat \mu^\top \left(F_{eq} v + f_{eq} \right) - \hat \nu_G^\top \left(G v + g \right) - \hat \nu_H^\top \left( H v + h \right) \\
&\quad + \hat \xi \left(G v + g \right)^\top \left( H v + h \right),
\end{align*}
\else
\begin{align*}
\mathcal{L}(v, \hat \eta, \hat \mu, \hat \nu_G, \hat \nu_H, \hat \xi) &:= J(v) + \hat \eta^\top \left(F_{in} v + f_{in} \right) + \hat \mu^\top \left(F_{eq} v + f_{eq} \right) \\
&\qquad- \hat \nu_G^\top \left(G v + g \right) - \hat \nu_H^\top \left( H v + h \right) + \hat \xi \left(G v + g \right)^\top \left( H v + h \right),
\end{align*}
\fi
where we introduced KKT multiplier variables $\hat \eta \in \R^{p}$, $\hat \mu \in \R^{q}$, $\hat \nu_G \in \R^{m}$, $\hat \nu_H  \in \R^{m}$, and $\hat \xi \in \R$ for the constraints in~\eqref{eq:MPCC}.
The classical KKT conditions for~\eqref{eq:MPCC} are then given as follows~\cite[Sec. 5.5.3]{Boyd2005}, \cite[Sec. 5.1]{Bertsekas2003}:
\begin{gather}
\nabla_v \mathcal{L}(v, \hat \eta, \hat \mu, \hat \nu_G, \hat \nu_H, \hat \xi) = 0, \nonumber \\
\begin{aligned}
\hat\eta^{[i]} &\geq 0 \quad \text{for} \quad i \in \mathcal{I}_{in}, & \hat\eta^{[i]} &= 0 \quad \text{for} \quad i \not\in \mathcal{I}_{in}, \\
\hat\nu_G^{[i]} &= 0 \quad \text{for} \quad i \in \gamma, & \hat\nu_H^{[i]} &= 0 \quad \text{for} \quad i \in \alpha,
 \\
\hat\nu_G^{[i]} &\geq 0 \quad \text{for} \quad i \in \alpha \cup \beta, & \hat\nu_H^{[i]} &\geq 0 \quad \text{for} \quad i \in \beta \cup \gamma.
\end{aligned} \label{eq:KKTConditions}
\end{gather}
A set of primal variables $v^* \in \R^{n_v}$ feasible in~\eqref{eq:MPCC} and KKT multiplier variables (also called \emph{Lagrange multipliers}) that together satisfy the KKT conditions~\eqref{eq:KKTConditions} are also called a \emph{primal-dual solution pair} for~\eqref{eq:MPCC}.
As discussed in the introduction the KKT conditions~\eqref{eq:KKTConditions} may not be necessary optimality conditions for~\eqref{eq:MPCC} due to failure of MFCQ~\cite{Chen1995}. A number of alternative stationarity conditions to the classical KKT conditions have therefore been introduced. We will limit the discussion to the concepts necessary for our control context; for a comprehensive discussion of weaker stationarity conditions that are applicable for more general MPCCs see~\cite{Ye2005,Flegel2005}.

The strongest optimality conditions that can generally be expected to hold at an optimal point~\cite{Hoheisel2011} are the so-called \emph{M(ordukhovich)}-stationary conditions:
\begin{definition}[M-stationarity] \label{def:MStationarity}
A feasible point $v^* \in \R^{n_v}$ of~\eqref{eq:MPCC} is called M-stationary if there exist $\eta \in \R^p$, $\mu \in \R^q$, $\nu_G \in \R^m$, and $\nu_H \in \R^m$ such that
\begin{gather}
\nabla J (v^*) + F_{in}^\top \eta + F_{eq}^\top \mu - G^\top \nu_G - H^\top \nu_H = 0,  \nonumber \\
\begin{aligned}
\eta^{[i]} &\geq 0 \quad \text{for} \quad i \in \mathcal{I}_{in}, & \eta^{[i]} &= 0 \quad \text{for} \quad i \not\in \mathcal{I}_{in}, \\
\nu_G^{[i]} &= 0 \quad \text{for} \quad i \in \gamma, & \nu_H^{[i]} &= 0 \quad \text{for} \quad i \in \alpha,
\end{aligned} \label{eq:MStationarity} \\
(\nu_G^{[i]} \geq 0 \wedge \nu_H^{[i]} \geq 0) \quad \vee \quad \nu_G^{[i]} \nu_H^{[i]} = 0 \quad \text{for} \quad i \in \beta. \nonumber
\end{gather}
\end{definition}

The quantities $\eta$, $\nu_G$, and $\nu_H$ are commonly referred to as \emph{Lagrange multipliers}, although they are \emph{distinct} from the classical Lagrange multipliers appearing in the KKT conditions~\eqref{eq:KKTConditions}. To avoid confusion we refer to the former here as the MPCC multipliers and use the term KKT multipliers for the latter.
The constraints on the signs of these MPCC multipliers are different, i.e.\ the multipliers for the singularly active complementarity constraints, $\nu_G^{[i]}$ for $i \in \alpha$ and $\nu_H^{[i]}$ for $i \in \gamma$, are allowed to be negative. The same holds for one of the multipliers corresponding to the biactive complementarity constraints, $\nu_G^{[i]}$ and $\nu_H^{[i]}$ for $i \in \beta$, as long as the other one is equal to zero.
While it can be shown that the M-stationarity conditions correspond to non-smooth KKT conditions  for an equivalent formulation of~\eqref{eq:MPCC} \cite{Ye2005}, it is possible that an M-stationary local minimum $v^*$ of~\eqref{eq:MPCC} does not admit a primal-dual solution to the classical KKT conditions~\eqref{eq:KKTConditions}~\cite[Ex.~3]{Scheel2000}.

The so-called \emph{strong stationarity} conditions are stronger than those of M-stationarity and have a close relation to the KKT conditions of~\eqref{eq:MPCC}:
\begin{definition}[S-stationarity]\label{def:StrongStationarity}
A feasible point $v^* \in \R^{n_v}$ of~\eqref{eq:MPCC} is strongly (or S-)stationary if there exist $\eta \in \R^p$, $\mu \in \R^q$, $\nu_G \in \R^m$, and $\nu_H \in \R^m$ such that
\begin{gather}
\nabla J (v^*) + F_{in}^\top \eta + F_{eq}^\top \mu - G^\top \nu_G - H^\top \nu_H = 0,  \nonumber \\
\begin{aligned}
\eta^{[i]} &\geq 0 \quad \text{for} \quad i \in \mathcal{I}_{in}, & \eta^{[i]} &= 0 \quad \text{for} \quad i \not\in \mathcal{I}_{in}, \\
\nu_G^{[i]} &= 0 \quad \text{for} \quad i \in \gamma, & \nu_H^{[i]} &= 0 \quad \text{for} \quad i \in \alpha,
\end{aligned} \label{eq:SStationarity} \\
\nu_G^{[i]} \geq 0 \wedge \nu_H^{[i]} \geq 0 \quad \text{for} \quad i \in \beta. \nonumber
\end{gather}
\end{definition}
The only difference between M- and S-stationarity lies in the conditions on the biactive multipliers. Hence, the two stationarity conditions (and in fact most other MPCC stationarity conditions) collapse into S-stationarity in the case of strict complementarity ($\beta = \emptyset$). %

A feasible point $v^*$ of~\eqref{eq:MPCC} is an S-stationary point if and only if there exist KKT multipliers satisfying the classical KKT conditions~\eqref{eq:KKTConditions} for~\eqref{eq:MPCC} at the same point~\cite[Prop.\ 4.2]{Flegel2005a}.  However, the classical KKT multipliers in~\eqref{eq:KKTConditions} are \emph{distinct} from the MPCC multipliers certifying S-stationarity in~\eqref{eq:SStationarity}, e.g.\ the MPCC multipliers characterizing an S-stationary point do not include a multiplier for the orthogonality constraint~\eqref{eq:MPCCComplementarity}.
It can be shown that MPCC multipliers $\nu_G$ and $\nu_H$ certifying strong stationarity of $v^*$ can be computed from KKT multipliers satisfying~\eqref{eq:KKTConditions} as~
\ifTwoColumn
\begin{subequations}
\label{eq:MultiplierConversion}
\begin{align}
\nu_G &= \hat \lambda_G - \hat\xi \left( H v^* + h \right), \\
\nu_H &= \hat \lambda_H - \hat\xi \left( G v^* + g \right).
\end{align}
\end{subequations}
\else
\begin{align}
\nu_G &= \hat \lambda_G - \hat \xi \left( H v^* + h \right), &
\nu_H &= \hat \lambda_H - \hat \xi \left( G v^* + g \right). \label{eq:MultiplierConversion}
\end{align}
\fi
While $\hat \lambda_G$ and $\hat \lambda_H$ must be nonnegative because they originate from the classical KKT conditions, individual components of $\nu_G$ and $\nu_H$ might be negative as indicated in Definition~\ref{def:StrongStationarity}. Derivation of KKT multipliers from given MPCC multipliers certifying S-stationarity is similarly possible~\cite{Flegel2005a}.

While M-stationarity is a necessary optimality condition for the affine MPCC~\eqref{eq:MPCC}~\cite{Ye2005}, additional conditions on the MPCC multipliers are required to make M-stationarity a sufficient condition for local optimality, e.g.~\cite[Thm.~2.3]{Ye2005}. S-stationarity, on the other hand, is a sufficient optimality condition for the affine MPCC~\eqref{eq:MPCC}, but is not a necessary optimality condition in general~\cite[Ex.~3]{Scheel2000}. We show, however, that in the particular problem~\eqref{eq:LCCFTOC} considered here the S-stationarity conditions are both necessary and sufficient for optimality as stated in Theorem~\ref{thm:OptimalityConditions}.

Our proof of Theorem~\ref{thm:OptimalityConditions} is based on a regularity property of the \emph{linearized tangent cone} $\mathcal{T}^{lin}(v^*)$ to the constraints of~\eqref{eq:MPCC} at a feasible $v^*$ \cite{Flegel2005b} defined as
\[
\mathcal{T}^{lin}(v^*) := \left\{ d \in \R^{n_v} \bigmid \begin{aligned}
\forall i &\in \mathcal{I}_{in} \colon & F_{in}^{[i,:]} d &\leq 0 \\
&& F_{eq} d &= 0 \\
\forall i &\in \alpha \colon & G^{[i,:]} d &= 0 \\
\forall i &\in \gamma \colon & H^{[i,:]} d &= 0 \\
\forall i &\in \beta \colon & G^{[i,:]} d &\geq 0 \\
\forall i &\in \beta \colon & H^{[i,:]} d &\geq 0 \\
\end{aligned} \right\}.
\]
and of the related \emph{MPEC-linearized tangent cone} $\mathcal{T}^{lin}_{MPEC}(v^*)$~\cite{Ye2005,Flegel2005a}:
\ifTwoColumn
\begin{equation}
\begin{gathered}
\mathcal{T}^{lin}_{MPEC}(v^*) :=  \mathcal{T}^{lin}(v^*) \enspace \cap \\
 \left\{ d \in \R^{n_v} \bigmid \forall i \in \beta \colon (G^{[i,:]} d) (H^{[i,:]} d) = 0 \right\}
\end{gathered} \label{eq:MPECLinTangentCone}
\end{equation}
\else
\begin{equation}
\mathcal{T}^{lin}_{MPEC}(v^*) := \mathcal{T}^{lin}(v^*) \cap \left\{ d \in \R^{n_v} \bigmid \forall i \in \beta \colon (G^{[i,:]} d) (H^{[i,:]} d) = 0 \right\} \label{eq:MPECLinTangentCone}
\end{equation}
\fi
The \emph{tangent cone} $\mathcal{T}(v^*)$ to the feasible set $\mathcal{V}$ of~\eqref{eq:MPCC} is a closed cone~\cite{Rockafellar1998,Pang1999} and defined as
\ifTwoColumn
\begin{align*}
\mathcal{T}(v^*) := \bigg\{ d \in \R^{n_v} \mid &~\exists t_k \in \R_+, v_k \in \mathcal{V} \colon  \\
&\lim_{k \to \infty } t_k = 0 \enspace \wedge \enspace \lim_{k \to \infty} \frac{v_k - v^*}{t_k} = d \bigg\}.
\end{align*}
\else
\[
\mathcal{T}(v^*) := \left\{ d \in \R^{n_v} \bigmid \exists t_k \in \R_+, v_k \in \mathcal{V} \colon \lim_{k \to \infty } t_k = 0 \enspace \wedge \enspace \lim_{k \to \infty} \frac{v_k - v^*}{t_k} = d \right\}.
\]
\fi
It holds that $\mathcal{T}(v^*) \subseteq \mathcal{T}^{lin}_{MPEC}(v^*) \subseteq \mathcal{T}^{lin}(v^*)$~\cite{Flegel2005} and, hence, $\left(\mathcal{T}^{lin}(v^*) \right)^\circ \subseteq \left(\mathcal{T}^{lin}_{MPEC}(v^*)\right)^\circ \subseteq \left( \mathcal{T}(v^*) \right)^\circ$, where $C^\circ$ is the polar cone of $C$~\cite[Sec.\ 6.E]{Rockafellar1998}. Also note that $\mathcal{T}^{lin}(v^*)$ is a convex polyhedral cone while $\mathcal{T}(v^*)$ and $\mathcal{T}^{lin}_{MPEC}(v^*)$ are generally nonconvex.

\subsection{Necessary and sufficient optimality conditions for the optimal control problem}

We can now prove an important geometric property of the feasible set of the optimal control problem~\eqref{eq:LCCFTOC}. Specifically, that it satisfies the \emph{intersection property}~\cite{Flegel2007}:

\begin{lemma}[intersection property] \label{lem:IntersectionProperty}
The feasible set of~\eqref{eq:LCCFTOC} for an LC model satisfying Assumptions~\ref{ass:WellPosedness},~\ref{ass:DecomposablePSDRankOne}, and~\ref{ass:NontrivialSolutions} satisfies the intersection property at every feasible $\*v := (\*u,\*x,\*w)$, i.e.\
\begin{equation}
\left(\mathcal{T}^{lin}_{MPEC}(\*v) \right)^\circ = \left( \mathcal{T}^{lin}(\*v) \right)^\circ. \label{eq:IntersectionProperty}
\end{equation}
\end{lemma}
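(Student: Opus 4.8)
The plan is as follows. One inclusion, $\left(\mathcal{T}^{lin}(\*v)\right)^\circ\subseteq\left(\mathcal{T}^{lin}_{MPEC}(\*v)\right)^\circ$, is already recorded above, so only the reverse inclusion needs proof. Since the polar of a set coincides with the polar of its closed convex hull, and since $\mathcal{T}^{lin}(\*v)$ is a closed convex cone containing $\mathcal{T}^{lin}_{MPEC}(\*v)$, it suffices to show
\[
\mathcal{T}^{lin}(\*v)\subseteq\mathrm{conv}\!\left(\mathcal{T}^{lin}_{MPEC}(\*v)\right);
\]
this forces $\mathrm{conv}(\mathcal{T}^{lin}_{MPEC}(\*v))=\mathcal{T}^{lin}(\*v)$ and hence \eqref{eq:IntersectionProperty}. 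Fix $\*v$ with its index sets $\alpha,\beta,\gamma$ as in \eqref{eq:ActiveComplementarityConstraints}, take any $d\in\mathcal{T}^{lin}(\*v)$ (written $d=(\delta\*u,\delta\*x,\delta\*w)$ in the partition of $\*v$), and call $i\in\beta$ \emph{doubly active} (for $d$) if $G^{[i,:]}d>0$ and $H^{[i,:]}d>0$. If no $i\in\beta$ is doubly active then, since both $G^{[i,:]}d\geq0$ and $H^{[i,:]}d\geq0$ on $\mathcal{T}^{lin}(\*v)$, one of them vanishes for every $i\in\beta$, so $d\in\mathcal{T}^{lin}_{MPEC}(\*v)$ and there is nothing to do; otherwise we eliminate doubly active indices one at a time.

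So pick a doubly active index $i\in\beta$. It corresponds to a timestep $k$ and, inside the complementarity problem at $(x_k,u_k)$, to a biactive local index $j$ of one of the sub-LCPs~\eqref{eq:SubComplementarityProblem}, say the $\ell$-th, with $M_\ell=m_\ell m_\ell^\top$. Assumption~\ref{ass:NontrivialSolutions} makes that sub-LCP have a solution with a strictly positive component (cf.\ the argument in the proof of Lemma~\ref{lem:InteriorTangents}), hence an $\alpha$-index $j_0$ of the block, so Lemma~\ref{lem:InteriorTangents} furnishes a perturbation $\delta$ of $w_{k,\ell}$, supported on $\{j_0,j\}$, with $m_\ell^\top\delta=0$, $\delta^{[j]}>0$, and $w_{k,\ell}+\delta$ again a solution of that sub-LCP. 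Let $d^i$ be the direction that equals $\delta$ in the coordinates of $w_{k,\ell}$ and is zero in all remaining coordinates (every component of $\delta\*x$ and $\delta\*u$, and every other block of $\delta\*w$). I claim $d^i\in\mathcal{T}^{lin}(\*v)$. The constraints involving only $\*x$ and $\*u$ are satisfied because those components vanish. For the linearized dynamics $F_{eq}d^i=0$, the only potentially nonzero contribution is $B_w\delta w_k$, and this is zero: replacing the $\ell$-th block of $w_k$ by $w_{k,\ell}+\delta$ gives another solution of the \emph{full} complementarity problem at $(x_k,u_k)$, so Assumption~\ref{ass:WellPosedness} forces the two solutions to produce the same successor state, i.e.\ $B_w\delta w_k=0$. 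For the $G$-conditions, $G^{[i',:]}d^i=0$ for every $i'$, because the $\*x$- and $\*u$-parts of $d^i$ are zero and the $\delta\*w$-part enters, if at all, only through the block-$\ell$ inner product $m_\ell^\top\delta=0$; in particular the $\alpha$-equalities and the $\beta$-nonnegativities on $G$ hold. For the $H$-conditions, $H^{[i',:]}d^i$ is the relevant component of $\delta$ for rows $i'$ at timestep $k$ and block $\ell$ and zero otherwise; since $\delta$ is supported on $j_0\in\alpha$ and $j\in\beta$, it vanishes at every $\gamma$-index and at every biactive index different from $i$, while $H^{[i,:]}d^i=\delta^{[j]}>0$. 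Hence $d^i\in\mathcal{T}^{lin}(\*v)$, with $G^{[i,:]}d^i=0$, $H^{[i,:]}d^i>0$, and the complementarity sides of every other biactive index left untouched.

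Now set $c:=H^{[i,:]}d/H^{[i,:]}d^i>0$ and replace $d$ by $d-c\,d^i$. Because $G^{[\cdot,:]}d^i\equiv0$, $H^{[\cdot,:]}d^i=0$ on $\gamma$, and $H^{[i',:]}d^i=0$ for $i'\in\beta\setminus\{i\}$, the vector $d-c\,d^i$ still lies in $\mathcal{T}^{lin}(\*v)$; moreover $H^{[i,:]}(d-c\,d^i)=0$, so $i$ is no longer doubly active, while the $G$- and $H$-sides of all other biactive indices are unchanged. Thus the doubly active set strictly shrinks. Iterating over the finitely many biactive indices produces $d=d^{\mathrm{fin}}+\sum_i c_i\,d^i$, where $d^{\mathrm{fin}}\in\mathcal{T}^{lin}(\*v)$ has no doubly active index (hence $d^{\mathrm{fin}}\in\mathcal{T}^{lin}_{MPEC}(\*v)$) and each $d^i\in\mathcal{T}^{lin}(\*v)$ has all its biactive $G$-sides equal to zero (hence $c_i\,d^i\in\mathcal{T}^{lin}_{MPEC}(\*v)$). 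A finite sum of elements of the cone $\mathcal{T}^{lin}_{MPEC}(\*v)$ belongs to its convex hull (rescale each summand by the number of summands), so $d\in\mathrm{conv}(\mathcal{T}^{lin}_{MPEC}(\*v))$, which is what we needed.

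The delicate point --- and essentially the only place the standing assumptions are used --- is showing that each correction direction $d^i$ lies in $\mathcal{T}^{lin}(\*v)$. Lemma~\ref{lem:InteriorTangents}, which itself relies on the element-wise non-vanishing of $m_\ell$ (Assumption~\ref{ass:DecomposablePSDRankOne}) and on a strictly negative slack entry (Assumption~\ref{ass:NontrivialSolutions}), supplies a perturbation confined to one sub-block, lying in $\ker m_\ell^\top$, and moving exactly one biactive index out of the biactive set; Assumption~\ref{ass:WellPosedness} is then exactly what turns ``$w_{k,\ell}+\delta$ is another LCP solution'' into ``$B_w\delta w_k=0$'', i.e.\ feasibility for the linearized dynamics. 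Everything afterwards is bookkeeping: since $d^i$ touches only one timestep, one block, one $\alpha$-index and one $\beta$-index, removing doubly active indices never disturbs those already processed, so the recursion terminates within $|\beta|$ steps.
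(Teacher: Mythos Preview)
Your proof is correct. The core construction---building, for each biactive index, a direction supported only in the $\*w$-coordinates of a single timestep and sub-block via Lemma~\ref{lem:InteriorTangents}, and checking it lies in $\mathcal{T}^{lin}(\*v)$ by using well-posedness to get $B_w\,\delta w_k=0$---is exactly the construction the paper carries out. Where the two diverge is in how these directions are used. The paper observes that their existence is precisely condition $(\mathbf{A_H})$ of \cite[Lemma~1]{Pang1999} (with $\beta_{GH_1}=\emptyset$, $\beta_{GH_2}=\beta$) and invokes that lemma together with \cite[Theorem~1]{Pang1999} to conclude the intersection property. You instead give a direct, self-contained argument: iteratively subtract scaled copies of these directions from an arbitrary $d\in\mathcal{T}^{lin}(\*v)$ until no doubly active index remains, thereby exhibiting $d$ as a finite sum of elements of $\mathcal{T}^{lin}_{MPEC}(\*v)$ and hence as an element of its convex hull. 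In effect you reprove, for this particular setting, the content of the cited results from \cite{Pang1999}. Your route is longer but fully self-contained; the paper's is more concise but defers the combinatorial work to an external reference. A small bonus of your version is that you invoke Assumption~\ref{ass:WellPosedness} directly to obtain $B_w\,\delta w_k=0$, whereas the paper routes through Lemma~\ref{lem:NecessaryConditionWellPosedness}, which is stated only as a sufficient condition.
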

\begin{proof}
If $\beta = \emptyset$ then the result is obvious, so we assume $\beta \neq \emptyset$ throughout and will use Lemma 1 from~\cite{Pang1999} to prove that equality holds in~\eqref{eq:IntersectionProperty}. To this end we will for every $j \in \beta$ find a vector $d \in \R^{n_v}$ such that
\begin{subequations}
\label{eq:RecessionVectorConstraints}
\ifTwoColumn
\begin{gather}
H^{[j,:]} d > 0, \enspace
F_{eq}d = 0,  \enspace F_{in}^{[i,:]} d = 0 \enspace \forall i \in \mathcal{I}_{in}, \label{eq:RecessionVectorConstraintsLinear} \\
G^{[i,:]} d = 0 \ \forall i \in \alpha \cup \beta,  \ H^{[i,:]} d = 0 \ \forall i \in \gamma \cup \beta \setminus \{j\}, \label{eq:RecessionVectorConstraintsLCP}
\end{gather}
\else
\begin{align}
F_{eq}d &= 0, & F_{in}^{[i,:]} d &= 0 \enspace \forall i \in \mathcal{I}_{in} \label{eq:RecessionVectorConstraintsLinear} \\
G^{[i,:]} d &= 0 \enspace \forall i \in \alpha \cup \beta, & H^{[i,:]} d &= 0 \enspace \forall i \in \gamma \cup \beta \setminus \{j\}, & H^{[j,:]} d &> 0, \label{eq:RecessionVectorConstraintsLCP}
\end{align}
\fi
\end{subequations}
where we used the more compact notation of~\eqref{eq:MPCC}. It can be seen that the $d$ we construct must be a local recession vector to the feasible set of~\eqref{eq:LCCFTOC} such that the biactive complementarity constraint $j$ becomes singularly active, i.e.\ for a sufficiently small $\epsilon$ we have
\[
\alpha(v + \epsilon d) = \alpha(v) \cup \{j \} \text{ and } \beta(v + \epsilon d) = \beta(v) \setminus \{j \}.
\]
The similarity with Lemma~\ref{lem:InteriorTangents} is not coincidental, we will in fact use it in the proof.

Let the recession vector $d := (\mathbf{d_u}, \mathbf{d_x}, \mathbf{d_w})$ be such that $\mathbf{d_u} = 0$ and $\mathbf{d_x} = 0$. Comparing~\eqref{eq:LCCFTOC} with~\eqref{eq:RecessionVectorConstraints} it is easy to see that~\eqref{eq:RecessionVectorConstraints} in that case decouples over the horizon. The complementarity constraint $j$ we are considering is part of~\eqref{eq:LCMPCComplementarity} for a particular timestep $ k$ and we can set all components of $\mathbf{d_w} := \left(d_{w_0}, \dots, d_{w_{N-1}}\right)$ not corresponding to $ k$ equal to zero.

We have now reduced the conditions~\eqref{eq:RecessionVectorConstraints} to finding a special recession direction for a single LCP~\eqref{eq:LCMPCComplementarity}.
Due to Assumption~\ref{ass:DecomposablePSDRankOne} this problem decouples even further into the linear complementarity sub-problems~\eqref{eq:SubComplementarityProblem} in the variables $w_{k,i}$. Hence, by Assumption 3 and Lemma~\ref{lem:InteriorTangents} there exists a direction $d$ satisfying~\eqref{eq:RecessionVectorConstraintsLCP}.

From the proof of Lemma~\ref{lem:InteriorTangents} we can see that this $d_{w_k} \in \nullspace(E_w)$ and, hence, by Lemma~\ref{lem:NecessaryConditionWellPosedness} also $d_{w_k} \in \nullspace(B_w)$. Any additional linear inequality constraints present in the optimal control problem~\eqref{eq:LCCFTOC} would  constrain only states and control inputs and the corresponding components of the recession vector $d$ are assumed zero.

The  argument above proves that  we can construct for any $j \in \beta$ a recession vector satisfying~\eqref{eq:RecessionVectorConstraints}. This is equivalent to condition $\mathbf{(A_H)}$ in Lemma~1 of~\cite{Pang1999} with $\beta_{GH_1} = \emptyset$ and $\beta_{GH_2} = \beta$. By the same Lemma and Theorem~1 in the same reference it follows that the intersection property holds.
\end{proof}

With the intersection property established for~\eqref{eq:LCCFTOC} we are now in a position to prove Theorem~\ref{thm:OptimalityConditions}:

\textbf{Proof of Theorem~\ref{thm:OptimalityConditions}:} Consider the tangent cone $\mathcal{T}\left(\*v^*\right)$ to the constraints in~\eqref{eq:LCCFTOC} at the feasible $\*v^*$ from the theorem (for details see~\cite{Flegel2007,Rockafellar1998}). We have, e.g.\ from~\cite{Flegel2005},
\begin{equation}
\mathcal{T}(\*v^*) \subseteq \mathcal{T}^{lin}_{MPEC}(\*v^*) \subseteq \mathcal{T}^{lin}(\*v^*).\label{eq:TangentConeInclusions}
\end{equation}
Since~\eqref{eq:LCCFTOC} is an affine MPCC the MPEC-Abadie constraint qualification holds at every feasible $\*v$~\cite[Thm.~3.2]{Flegel2005b} and in particular at $\*v^*$, which means $\mathcal{T}(\*v^*) = \mathcal{T}^{lin}_{MPEC}(\*v^*)$~\cite[Def.\ 3.1]{Flegel2005b}. Substituting this and considering the polar cones in~\eqref{eq:TangentConeInclusions} we obtain
\[
\left(\mathcal{T}(\*v^*)\right)^\circ = \left(\mathcal{T}^{lin}_{MPEC}(\*v^*)\right)^\circ \supseteq \left(\mathcal{T}^{lin}(\*v^*)\right)^\circ.
\]
From Lemma~\ref{lem:IntersectionProperty} we know that the intersection property holds for~\eqref{eq:LCCFTOC} and, hence, the inclusion is actually an equality and we finally have
\[
\left(\mathcal{T}(\*v^*)\right)^\circ = \left(\mathcal{T}^{lin}(\*v^*)\right)^\circ.
\]
This is the so-called \emph{Guignard constraint qualification}. It follows from \cite[Thm.~16]{Flegel2007} that S-stationarity, in the sense of Definition~\ref{def:StrongStationarity}, is a necessary optimality condition for~\eqref{eq:LCCFTOC}. We also know from~\cite[Thm.~2.3]{Ye2005} that S-stationarity is a sufficient optimality condition for~\eqref{eq:LCCFTOC}. S-stationarity is in fact equivalent to the classical KKT-conditions for~\eqref{eq:LCCFTOC}~\cite[Prop.\ 4.2]{Flegel2005a}. This completes the proof.
\hfill $\square$

Note that the conclusion of Theorem~\ref{thm:OptimalityConditions} also holds with additional linear constraints on the states and inputs, because the recession direction $d$ constructed in the proof of Lemma~\ref{lem:IntersectionProperty} has no components in the $x$ and $u$ directions, i.e.\ the intersection property will still hold.

\subsection{Stronger conditions for global and isolated optima}

We now present a number of stronger sufficient conditions that can be used to verify whether a given locally optimal solution $\*v^* := (\*u^*,\*x^*,\*w^*)$ is, in fact, \emph{globally optimal} or an \emph{isolated minimizer}. The latter means that there is no other locally optimal point within a neighborhood around $\*v^*$.
To present sufficient conditions for an S-stationary solution $\*v^*$ of~\eqref{eq:LCCFTOC} to be globally optimal, we introduce MPCC multipliers $\mu_k \in \R^{n_x}$ for~\eqref{eq:LCMPCDynamicAssignment} and $\nu_{k}, \lambda_k \in \R^{n_w}$ for~\eqref{eq:LCMPCComplementarity} to present the S-stationarity conditions for $\*v^*$:
\begin{subequations}
\label{eq:SStationarityConditions}
\ifTwoColumn
\begin{align}
\frac{\partial \ell_N}{\partial x_N}( x^*_N) + \mu_{N-1} &= 0 \label{eq:SStatCondStatXN} \\
\forall k = 1,\dots,N-1 \colon  \hspace{3cm} & \nonumber \\
\frac{\partial \ell_k}{\partial x_k}( x^*_k, u^*_k) + \mu_{k-1} - A^\top \mu_k - E_x^\top \nu_k &= 0 \label{eq:SStatCondStatXk} \\
\forall k = 0,\dots,N-1 \colon \hspace{3cm} & \nonumber \\
\frac{\partial \ell_k}{\partial u_k}( x^*_k, u^*_k) - B_u^\top \mu_k - E_u^\top \nu_k  &= 0  &&\label{eq:SStatCondStatUk} \\
 - B_w^\top \mu_k - E_w^\top \nu_k - \lambda_k &= 0 \label{eq:SStatCondStatWk}
\end{align}
\else
\begin{align}
&&\frac{\partial \ell_N}{\partial x_N}( x_N^*) + \mu_{N-1} &= 0 \label{eq:SStatCondStatXN} \\
\forall k = 1,\dots,N-1 &\colon &%
\frac{\partial \ell_k}{\partial x_k}( x^*_k, u^*_k) + \mu_{k-1} - A^\top \mu_k - E_x^\top \nu_k &= 0 \label{eq:SStatCondStatXk} \\
\forall k = 0,\dots,N-1 &\colon &
\frac{\partial \ell_k}{\partial u_k}( x_k^*,  u_k^*) - B_u^\top \mu_k - E_u^\top \nu_k  &= 0  &&\label{eq:SStatCondStatUk} \\
\forall k = 0,\dots,N-1 &\colon &
 - B_w^\top \mu_k - E_w^\top \nu_k - \lambda_k &= 0 \label{eq:SStatCondStatWk}
\end{align}
\fi
We omit the (inconsequential) conditions related to the derivatives with respect to $x_0$.
Since the complementarity constraints in~\eqref{eq:LCMPCComplementarity} naturally decouple over the prediction horizon for the given S-stationary $\*v^*$, we can consider index sets $\alpha_k$, $\beta_k$, and $\gamma_k$ analogously defined as in~\eqref{eq:ActiveComplementarityConstraints} for each stagewise LCP. With that, we have the following conditions for the MPCC multipliers corresponding to the complementarity constraints~\eqref{eq:LCMPCComplementarity}:
\begin{gather}
\forall i \in \alpha_k \colon \quad \lambda_k^{[i]} = 0, \qquad
\forall i \in \gamma_k \colon \quad \nu_k^{[i]} = 0, \label{eq:SStatCondComplAlphaGamma} \\
\forall i \in \beta_k \colon \quad \nu_{k}^{[i]} \geq 0, \enspace \lambda_{k}^{[l]} \geq 0 \label{eq:SStatCondComplBiactive}
\end{gather}
\end{subequations}
We can now state a sufficient condition for an S-stationary solution of~\eqref{eq:LCCFTOC} to be globally optimal.

\begin{theorem}[sufficient global optimality condition] \label{thm:SufficientFirstOrderCondition}
Let $\*v^*=( \*u^*,\*x^*, \*w^*)$ be an S-stationary point for~\eqref{eq:LCCFTOC}, i.e.\ there exist MPCC multipliers $\mu_k \in \R^{n_x}$, $\nu_{k} \in \R^{n_w}$, and $\lambda_{k} \in \R^{n_w}$ such that~\eqref{eq:SStationarityConditions} holds. If additionally for all $ k \in \Nn{N-1}$ %
\begin{align}
\nu_k \geq 0 \text{ and } \lambda_k \geq 0, \label{eq:SufficientOptimalityConditions}
\end{align}
then $\*v^*$ is a globally optimal solution to~\eqref{eq:LCCFTOC}.
\end{theorem}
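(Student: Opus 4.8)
The plan is to recognise that, under the extra sign conditions \eqref{eq:SufficientOptimalityConditions}, the S-stationarity system \eqref{eq:SStationarityConditions} is nothing but the Karush--Kuhn--Tucker system of a \emph{convex relaxation} of \eqref{eq:LCCFTOC}, and then to exploit an inclusion of feasible sets. The relaxation I would use is obtained from \eqref{eq:LCCFTOC} by discarding the orthogonality part of \eqref{eq:LCMPCComplementarity} while keeping the two one-sided inequalities, i.e.\ by replacing \eqref{eq:LCMPCComplementarity} with $E_w w_k + E_x x_k + E_u u_k + e \ge 0$ and $w_k \ge 0$ for all $k$. Since the stage and terminal costs are convex, the dynamics \eqref{eq:LCMPCDynamicAssignment} are affine, and the remaining constraints are linear, the relaxed problem is convex, so its KKT conditions are sufficient for global optimality. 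Denote its feasible set $\mathcal{V}_{\mathrm{rel}}$; then $\*v^*$ is feasible for \eqref{eq:LCCFTOC} and hence lies in $\mathcal{V}_{\mathrm{rel}}$, and the feasible set of \eqref{eq:LCCFTOC} is contained in $\mathcal{V}_{\mathrm{rel}}$.

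The core step is to certify $\*v^*$ as a KKT point of this relaxed convex program, with $\mu_k$ the multiplier of \eqref{eq:LCMPCDynamicAssignment}, $\nu_k$ that of $E_w w_k + E_x x_k + E_u u_k + e \ge 0$, and $\lambda_k$ that of $w_k \ge 0$. Writing out the gradient of the corresponding Lagrangian in $x_k$, $u_k$ and $w_k$ reproduces exactly \eqref{eq:SStatCondStatXN}--\eqref{eq:SStatCondStatWk}; dual feasibility $\nu_k \ge 0$, $\lambda_k \ge 0$ is precisely the added hypothesis \eqref{eq:SufficientOptimalityConditions}; and primal feasibility is inherited from $\*v^*$ being feasible for \eqref{eq:LCCFTOC}. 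The only thing left is complementary slackness, namely $\nu_k^{[i]}\bigl(E_w w_k^* + E_x x_k^* + E_u u_k^* + e\bigr)^{[i]} = 0$ and $\lambda_k^{[i]} (w_k^*)^{[i]} = 0$, which I would dispatch componentwise over the partition $\alpha_k \cup \beta_k \cup \gamma_k$: for $i \in \alpha_k$ the residual vanishes and $\lambda_k^{[i]}=0$ by \eqref{eq:SStatCondComplAlphaGamma}; for $i \in \gamma_k$ we have $(w_k^*)^{[i]}=0$ and $\nu_k^{[i]}=0$ by \eqref{eq:SStatCondComplAlphaGamma}; for $i \in \beta_k$ both the residual and $(w_k^*)^{[i]}$ are zero, so both products vanish outright.

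Having established the full KKT system of the relaxation at $\*v^*$, convexity of the relaxed problem yields that $\*v^*$ globally minimises the cost over $\mathcal{V}_{\mathrm{rel}}$; since the feasible set of \eqref{eq:LCCFTOC} is a subset of $\mathcal{V}_{\mathrm{rel}}$ and $\*v^*$ is itself feasible for \eqref{eq:LCCFTOC}, it is \emph{a fortiori} globally optimal for \eqref{eq:LCCFTOC}. I do not expect a genuine obstacle here: the proof is essentially the observation that condition \eqref{eq:SufficientOptimalityConditions} upgrades the biactive sign conditions \eqref{eq:SStatCondComplBiactive} of S-stationarity to full nonnegativity, which is exactly what is needed to reinterpret \eqref{eq:SStationarityConditions} as optimality of a convex program; the only care required is the sign bookkeeping when matching the relaxed Lagrangian's gradient to \eqref{eq:SStationarityConditions} and the trivial componentwise slackness check on $\beta_k$.
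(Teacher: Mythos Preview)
Your argument is correct. The paper's own proof consists of a single sentence invoking \cite[Thm.~2.3]{Ye2005} together with the affinity of the constraints and convexity of the cost, so you have in effect reproduced, in a self-contained way, the mechanism behind that cited result: the extra sign conditions \eqref{eq:SufficientOptimalityConditions} turn the S-stationarity system into the KKT system of the convex relaxation obtained by dropping the orthogonality in \eqref{eq:LCMPCComplementarity}, and global optimality then follows from convexity plus the feasible-set inclusion. The two approaches are therefore the same at the level of ideas; yours is more explicit and avoids the external reference, while the paper's is terser but relies on the reader knowing (or looking up) the content of Ye's theorem.
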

\begin{proof}
Since all constraint functions in~\eqref{eq:LCCFTOC} are affine and we assumed convex stage- and terminal cost functions $\ell_k$ the result follows from~\cite[Thm.~2.3]{Ye2005}.
\end{proof}

In addition to the first-order optimality conditions from Theorems~\ref{thm:OptimalityConditions} and~\ref{thm:SufficientFirstOrderCondition} we can use second-order sufficient conditions to identify an isolated minimizer of~\eqref{eq:LCCFTOC}.
These pose conditions on the \emph{critical directions} at a stationary point $v^*$ of~\eqref{eq:MPCC} to ensure that no feasible non-ascent direction exists.
To this end, we introduce the so-called \emph{MPEC critical cone} based on the MPEC-linearized tangent cone $\mathcal{T}_{MPEC}^{lin}(v)$ from~\eqref{eq:MPECLinTangentCone}:
\[
\mathcal{C}_{MPEC}(v^*) := \mathcal{T}_{MPEC}^{lin}(v^*) \cap \left\{ d \in \R^{n_v} \bigmid  \nabla J(v^*)^\top d \leq 0 \right\}
\]
With this we introduce the following second-order sufficient condition:

\begin{definition}[strong second-order sufficient condition] \label{def:MSSOSC}
Given an M-stationary point $v^* \in \R^{n_v}$ for~\eqref{eq:MPCC} we say that the \emph{M-multiplier strong second-order sufficient condition} (M-SSOSC) holds at $v^*$ if and only if
\[
\forall d \in \mathcal{C}_{MPEC}(v^*) \setminus \{ 0 \} \colon \quad d^\top \nabla^2 J(v^*) d > 0.
\]
\end{definition}
Note that for~\eqref{eq:MPCC}, M-SSOSC does not depend on the values of any MPCC multipliers but instead only on the critical directions. %

For a comprehensive discussion of this and other second-order conditions for general MPCCs we refer to~\cite{Guo2013}. Note that we have simplified substantially the definitions from~\cite{Guo2013} since we are only considering the affine MPCC~\eqref{eq:MPCC}. We can use the M-SSOSC from Definition~\ref{def:MSSOSC} to identify an isolated minimizer $\*v^*$ of~\eqref{eq:LCCFTOC}, i.e.\ the only local minimizer within a neighborhood around $\*v^*$.

\begin{theorem}[isolated minimizer with unique complementarity variables] \label{thm:IsolatedMinimizer}
Let $\*v^* =(\*u^*,\*x^*,\*w^*)$ be an S-stationary point for~\eqref{eq:LCCFTOC} and let M-SSOSC hold at $\*v^*$.
Then $\*v^*$ is an isolated local minimizer, i.e.\ there exists an $\epsilon > 0$ such that
\ifTwoColumn
\begin{gather*}
\lVert \*v - \*v^* \rVert \leq \epsilon \quad \Rightarrow \\
 \ell_N(x^*_N) + \sum_{k=0}^{N-1} \ell_k(x^*_k, u^*_k) < \ell_N(x_N) + \sum_{k=0}^{N-1} \ell_k(x_k, u_k).
\end{gather*}
\else
\[
\lVert \*v - \*v^* \rVert \leq \epsilon \quad \Rightarrow \quad \ell_N(x^*_N) + \sum_{k=0}^{N-1} \ell_k(x^*_k, u^*_k) < \ell_N(x_N) + \sum_{k=0}^{N-1} \ell_k(x_k, u_k).
\]
\fi
Furthermore, $\*w^*$ are the only complementarity variables solving the LCP~\eqref{eq:LCMPCComplementarity} for the fixed $\*u^*$ and $\*x^*$.
\end{theorem}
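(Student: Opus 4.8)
The plan is to separate the two claims and tackle them in order. For the isolated-minimizer claim, I would invoke the general theory of second-order sufficient conditions for MPCCs from~\cite{Guo2013}: since $\*v^*$ is S-stationary for the affine MPCC~\eqref{eq:LCCFTOC} and S-stationarity implies M-stationarity, $\*v^*$ is in particular M-stationary, so M-SSOSC (Definition~\ref{def:MSSOSC}) applies. The standard MPCC second-order sufficiency result states that M-stationarity together with the strong second-order sufficient condition implies that $\*v^*$ is a strict local minimizer, in fact an isolated (strongly) local minimizer — no other stationary point, and hence no other local minimizer, lies in a sufficiently small neighborhood. Because the objective $\sum_k \ell_k + \ell_N$ is convex (as assumed) and, importantly, is \emph{independent of the complementarity variables} $\*w$, the curvature in the M-SSOSC inequality $d^\top \nabla^2 J(\*v^*) d > 0$ only sees the $(\*u,\*x)$-components of a critical direction $d = (\*d_u,\*d_x,\*d_w)$; I would remark that this is exactly what makes the second part of the theorem work.

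For the uniqueness of $\*w^*$, the key observation is geometric: if $\*w^{\prime} \neq \*w^*$ were another solution of the stagewise LCPs~\eqref{eq:LCMPCComplementarity} for the same $(\*u^*,\*x^*)$, then by Lemma~\ref{lem:PolyhedralSolutionSet} (applied stagewise and blockwise via Assumption~\ref{ass:DecomposablePSDRankOne}) the segment $\*w(t) := \*w^* + t(\*w^{\prime} - \*w^*)$ stays in the LCP solution set for $t$ in a neighborhood of $0$, and the direction $\*d := (0,0,\*w^{\prime}-\*w^*)$ is nonzero. I would then show $\*d$ lies in the MPEC critical cone $\mathcal{C}_{MPEC}(\*v^*)$: it satisfies the linear constraints trivially (no $u,x$ components, and $\*d_w \in \nullspace(E_w) \subseteq \nullspace(B_w)$ by Lemmas~\ref{lem:PolyhedralSolutionSet} and~\ref{lem:NecessaryConditionWellPosedness}, so the dynamic-assignment equalities hold); it respects the sign and equality conditions defining $\mathcal{T}^{lin}_{MPEC}(\*v^*)$ on the index sets $\alpha_k,\beta_k,\gamma_k$ because moving along the LCP solution polytope can only turn biactive constraints singularly active and preserves the complementarity product $(G^{[i,:]}d)(H^{[i,:]}d)$ appropriately; and since $\*d_u = 0$, $\*d_x = 0$ we get $\nabla J(\*v^*)^\top \*d = 0 \leq 0$. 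But then $\*d^\top \nabla^2 J(\*v^*)\*d = 0$ (again because $J$ does not depend on $\*w$), contradicting M-SSOSC. Hence no such $\*w^{\prime}$ exists.

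I expect the main obstacle to be the careful verification that $\*d = (0,0,\*w^{\prime}-\*w^*)$ genuinely lies in $\mathcal{T}^{lin}_{MPEC}(\*v^*)$ — that is, checking all the sign/equality constraints on $G^{[i,:]}\*d$ and $H^{[i,:]}\*d$ over the biactive, singularly-active, and inactive index sets, and confirming the product condition $\forall i \in \beta_k\colon (G^{[i,:]}\*d)(H^{[i,:]}\*d) = 0$. This needs the structural facts established in Section~\ref{sec:LCP}: by Lemma~\ref{lem:PolyhedralSolutionSet} the whole solution set is an affine slice of the nonnegative orthant on which $Mw+q$ is constant, so $H^{[i,:]}\*d$ (the $w$-component difference) and $G^{[i,:]}\*d$ (the change in $Mw+q$, which is zero) automatically satisfy the cone conditions, and the product condition holds because $G^{[i,:]}\*d = 0$ for every biactive $i$. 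A secondary subtlety is invoking the precise statement from~\cite{Guo2013} in the affine setting; I would state exactly which theorem there is being used and note the simplifications (affine constraints, convex $w$-independent objective) that reduce it to the form needed here.
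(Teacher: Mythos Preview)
Your proposal is correct. For the isolated-minimizer claim you and the paper do the same thing: invoke the second-order result from \cite{Guo2013} (the paper cites \cite[Cor.~4.3]{Guo2013} directly).

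For the uniqueness of $\*w^*$, however, the paper takes a shorter route than yours. Rather than verifying that the direction $\*d=(0,0,\*w'-\*w^*)$ lies in $\mathcal{C}_{MPEC}(\*v^*)$ and contradicting M-SSOSC through $\*d^\top\nabla^2 J(\*v^*)\*d=0$, the paper simply reuses the first conclusion: since the stagewise LCP solution set is a convex polytope (Lemma~\ref{lem:PolyhedralSolutionSet}), any alternative $\bar{\*w}$ yields feasible points $(\*u^*,\*x^*,\bar{\*w})$ arbitrarily close to $\*v^*$ with identical objective value (as $J$ does not depend on $\*w$), contradicting the strict-local-minimizer property just established. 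Your argument is more self-contained in that it contradicts the hypothesis M-SSOSC directly, and your careful check that $G^{[i,:]}\*d=0$ on all active indices (via constancy of $Mw+q$ on $\sol{q}{M}$) is exactly right; the paper's argument buys brevity by leveraging the first part of the theorem instead of re-entering the cone machinery.
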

\begin{proof}
The fact that $\*v^*$ is an isolated minimizer follows from~\cite[Cor.\ 4.3]{Guo2013}.

Assume there exists another $\mathbf{\bar w}$ such that $\mathbf{\bar v} := (\*u^*, \*x^*, \mathbf{\bar w})$ is feasible for~\eqref{eq:LCCFTOC}. Since the LCP~\eqref{eq:LCMPCComplementarity} decouples over the horizon by Lemma~\ref{lem:PolyhedralSolutionSet} the set of feasible $\*w$ is a convex polytope, hence $\mathbf{\bar w}$ can be arbitrarily close to $\*w^*$. Since $\mathbf{\bar w}$ does not enter the cost-function~\eqref{eq:LCMPCCost} we have $J(\mathbf{\bar v}) = J(\*v^*)$ and reached a contradiction. %
\end{proof}

\section{Optimality Conditions for Optimal Control Inputs}
\label{sec:OptimalTrajctories}

The proof of Theorem~\ref{thm:IsolatedMinimizer} hints at a possible challenge when solving the optimal control problem~\eqref{eq:LCCFTOC} that, to the authors' knowledge, has not yet been considered in the literature: we are interested in finding optimal control input trajectories $\*u^*$ and the corresponding state trajectory~$\*x^*$, but the optimal control problem also includes the optimal complementarity variables $\*w^*$. Fundamentally, the problem we want to solve is the hybrid optimal control problem
\begin{subequations}
\label{eq:OCP}
\begin{align}
\min_{\*u,\*x} \quad  &{\ell}_N(x_N) + \sum_{k=0}^{N-1} {\ell}_k(x_k,u_k) \label{eq:OCPCost} \\
\text{s.t.} \quad &\forall k = 0,\dots,{N-1} \colon \quad \label{eq:OCPDynamics} %
x_{k+1} = f(x_k,u_k),
\end{align}
\end{subequations}
where $f \colon \R^{n_x} \times \R^{n_u} \to \R^{n_x}$ describes the LC dynamics~\eqref{eq:LCModel}. The problem~\eqref{eq:LCCFTOC} is an instance of~\eqref{eq:OCP} that  explicitly considers complementarity variables $w$ to facilitate theoretical derivations and numerical computations.

It is conceivable that a local optimum $\*v^*= (\*u^*, \*x^*, \*w^*)$ of~\eqref{eq:LCCFTOC}, which is a member of the feasible set
\[
\mathcal{Q} := \left\{ \*v= (\*u, \*x, \*w) \bigmid \*v \text{ satisfies the constraints in~\eqref{eq:LCCFTOC}} \right\},
\]
has no corresponding optimal solution $(\*u^*, \*x^*)$ to~\eqref{eq:OCP} in the set
\[
\mathcal{U} := \left\{ (\*u, \*x) \bigmid \exists \*w \colon (\*u, \*x, \*w) \in \mathcal{Q} \right\},
\]
which is the projection of $\mathcal{Q}$ onto the state-input space and the feasible set of~\eqref{eq:OCP}.

We will next present an example to illustrate this point. To this end we introduce the set
\[
\mathcal{M}(\*u, \*x) := \left\{ \*w \bigmid (\*u, \*x, \*w) \in \mathcal{Q} \right\}
\]
of complementarity variables that are consistent in~\eqref{eq:LCCFTOC} with a given $\*u$ and $\*x$.

\subsection{Example: non-optimal input trajectories can be S-stationary}
\label{subsec:Example}

Consider the LC model %
\begin{subequations}
\label{eq:Ex1Dynamics}
\begin{alignat}{3}
x^+ &=  w^{[1]} + w^{[2]} -2,  \\
0 &\leq w^{[1]} + w^{[2]} + x + u &\quad & \perp \quad & w^{[1]}  &\geq 0, \\
0 &\leq w^{[1]} + w^{[2]} - 1  &&\perp & w^{[2]} &\geq 0,
\end{alignat}
\end{subequations}
which is equivalent to the continuous PWA model
\[
x^+ = \max\{-(x+u+2),-1\}.
\]
It is easy to verify that~\eqref{eq:Ex1Dynamics} satisfies Assumptions~\ref{ass:WellPosedness}--\ref{ass:NontrivialSolutions}, the details are omitted here for brevity.

Consider the optimal control problem~\eqref{eq:LCCFTOC} with initial state $x_0 = 0$, prediction horizon $N = 1$, stage cost $\ell_0(x_0,u_0) = \frac{1}{2} u_0^2$, and terminal cost $\ell_1(x_1) = \frac{1}{2} x_1^2$. For a control input $u_0 = -1$ we obtain $x_1 = -1$ and an infinite number of admissible complementarity variables with
\begin{align}
\mathcal{M}(-1, -1) =\left\{ w_0 \in \R_+^2 \bigmid w_0^{[1]} + w_0^{[2]} = 1 \right\}.  \label{eq:Example1InnerMultipliers}
\end{align}
Solving the S-stationarity conditions~\eqref{eq:SStationarity} for any choice of $w_0 \in \mathcal{M}(-1, -1)$ yields the unique MPCC multiplier candidates
\begin{align}
\mu_0 &= 1, & \nu_0 &= \begin{pmatrix} -1 \\ 0 \end{pmatrix}, & \lambda_0 &= \begin{pmatrix} 0 \\ 0 \end{pmatrix}. \label{eq:Ex1MPCCMultipliers}
\end{align}
To analyze optimality we have to distinguish three  cases:
\ifTwoColumn
\begin{align*}
\begin{aligned}
&\textbf{Case 1:} \quad & w_0^{[1]} &= 1, & w_0^{[2]}&= 0 \\
&&\Rightarrow \quad \alpha &= \{ 1 \}, & \beta &= \{ 2 \}, & \gamma &= \emptyset \\
&\textbf{Case 2:} \quad & w_0^{[1]} &> 0, & w_0^{[2]}&> 0 \\
&&\Rightarrow \quad \alpha &= \{1, 2\}, & \beta &= \emptyset, & \gamma &= \emptyset \\
&\textbf{Case 3:} \quad & w_0^{[1]} &= 0, & w_0^{[2]}&= 1 \\
&&\Rightarrow \quad \alpha &= \{2\}, & \beta &= \{ 1 \}, & \gamma &= \emptyset
\end{aligned}
\end{align*}
\else
\begin{align*}
\begin{aligned}
&\textbf{Case 1:} \quad & w_0^{[1]} &= 1, & w_0^{[2]}&= 0 & &\Rightarrow & \alpha &= \{ 1 \}, & \beta &= \{ 2 \}, & \gamma &= \emptyset \\
&\textbf{Case 2:} \quad & w_0^{[1]} &> 0, & w_0^{[2]}&> 0 & &\Rightarrow & \alpha &= \{1, 2\}, & \beta &= \emptyset, & \gamma &= \emptyset \\
&\textbf{Case 3:} \quad & w_0^{[1]} &= 0, & w_0^{[2]}&= 1 & &\Rightarrow & \alpha &= \{2\}, & \beta &= \{ 1 \}, & \gamma &= \emptyset
\end{aligned}
\end{align*}
\fi
Case~2 is the easiest to analyze since the biactive set $\beta$ is empty. Hence, for any $w_0^{[1]} >0 $ and $w_0^{[2]} > 0$, the point $\*v = (-1, -1, w_0)$ is S-stationary and therefore a local optimum. For case~1, we see that the MPCC multipliers $\nu_0^{[2]}$ and $\lambda_0^{[2]}$ corresponding to the biactive complementarity constraint $2$ are both zero, hence, in this case we are also at an S-stationary local optimum. Case~3, on the other hand, does not satisfy the conditions for S-stationarity (or alternative sufficient optimality conditions, e.g.\ from~\cite{Ye2005,Guo2013}) since $\nu_0^{[1]} < 0$. From Theorem~\ref{thm:OptimalityConditions} it follows that $\*v^* = (-1, -1, 0, 1)$ is not locally optimal since it is only M-stationary and not S-stationary.

Figure~\ref{fig:Ex1ValueFunction} shows the cost function values $J(\*x, \*u)$ in~\eqref{eq:LCMPCCost} for varying values of $u_0$ (which is also the value function of~\eqref{eq:OCP}) together with the corresponding admissible values of $w_0^{[1]}$. Corresponding values for $w_0^{[2]}$ and $x_1$ follow from the dynamics~\eqref{eq:Ex1Dynamics}. The three cases discussed above are marked in green along the dashed red line indicating the feasible set of the optimal control problem.

\begin{figure}[thb]
\begin{center}
\includegraphics[width=.9\columnwidth]{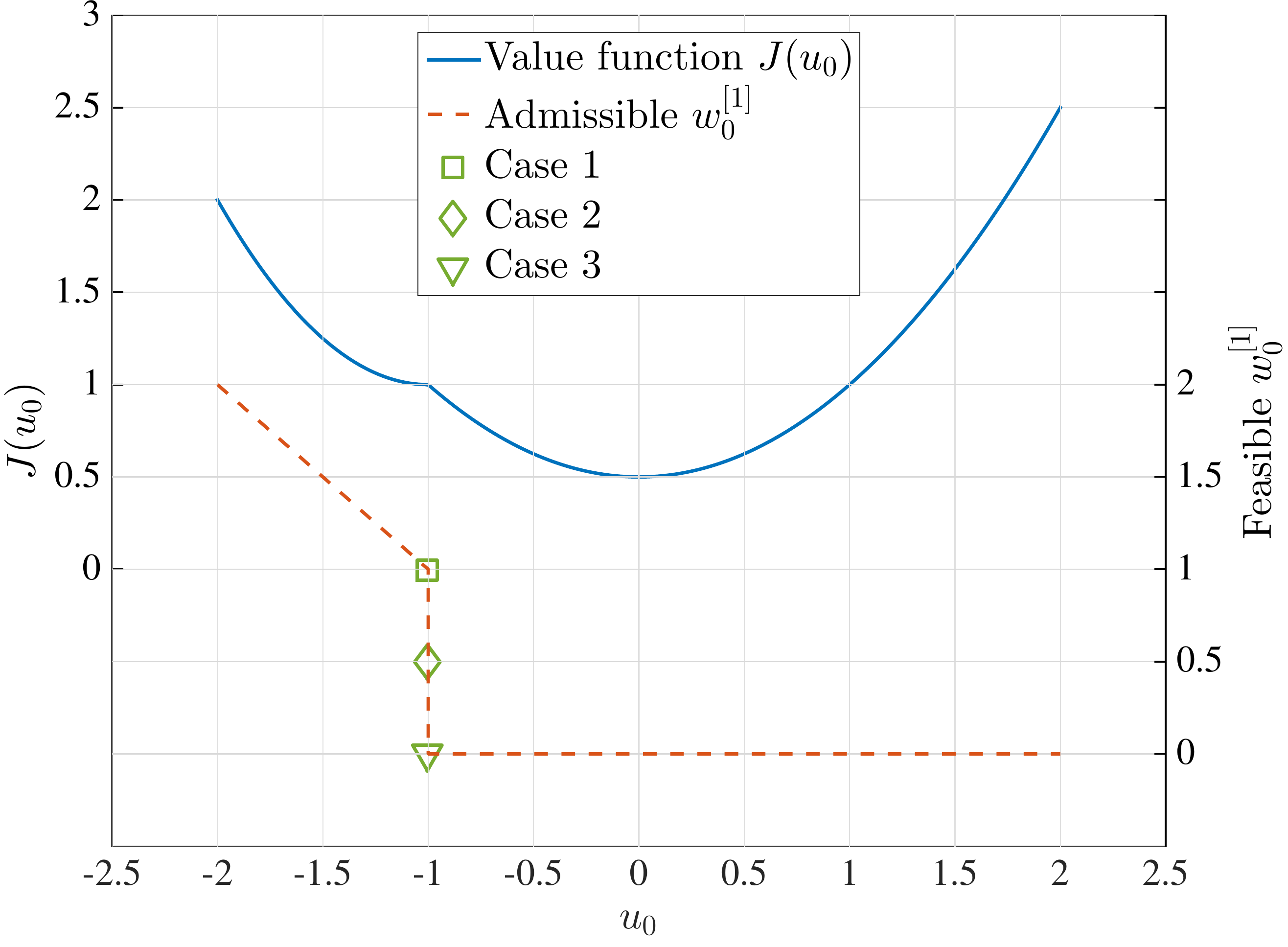}
\caption{Objective function value of optimal control problem~\eqref{eq:LCCFTOC} for example system~\eqref{eq:Ex1Dynamics} and varying control inputs together with admissible values for $w_0^{[1]}$.}
\label{fig:Ex1ValueFunction}
\end{center}
\end{figure}

From the value function $J$ in Figure~\ref{fig:Ex1ValueFunction} it is clear that a control input of $u_0 = -1$ should not be considered locally optimal from a control perspective and will not correspond to a local optimum of~\eqref{eq:OCP} in $\mathcal{U}$ because $\hat u = -1 + \epsilon$ for an arbitrarily small $\epsilon > 0$ is feasible and has a lower objective function value. At the same time, it illustrates how the local minima for~\eqref{eq:LCCFTOC} in cases~1 and~2 come about: in a small neighborhood around either of these points, a control input $u_0 > -1$ is not feasible because this would require a significant change in $w_0^{[1]}$. In other words, while
\[
\lVert (\*u^*, \*x^*) - (\*u, \*x) \rVert
\]
might be arbitrarily small,
\[
\lVert (\*u^*, \*x^*, \*w^*) - (\*u, \*x, \*w) \rVert \geq \epsilon \quad \forall \*w \in \mathcal{M}(\*u, \*x),
\]
for some finite $\epsilon >0$.
Hence, there exist values for $w_0$ such that $\*v^* = (u_0, x_1, w_0)$ is locally optimal for~\eqref{eq:LCCFTOC} with $u_0 = -1$. For $w_0^{[1]} = 0$ this suddenly changes because there exist tangential descent directions pointing inside the feasible set. At the same time, for any such $\*v^* \in \mathcal{Q}$ the corresponding point $(u_0, x_1) \in \mathcal{U}$ is not a local optimum for~\eqref{eq:OCP}.

\subsection{Necessary and sufficient conditions for optimal control trajectories}

The preceding example highlights the necessity of adapting Theorem~\ref{thm:OptimalityConditions} to the control setting. For control applications, one is primarily interested in the values of $\*u^*$ and $\*x^*$ and can disregard the actual value of $\*w^*$ (provided it is primal feasible). To that end, we introduce the notion of a \emph{locally optimal control trajectory}.

\begin{definition}[locally optimal control inputs] \label{def:OptimalTrajectories}
A  control input trajectory
$\*u^* := (u^*_0, u^*_1, \dots, u^*_{N-1})$ is called locally optimal with respect to~\eqref{eq:LCCFTOC} if there exists an $\epsilon > 0$
such that
\[
\ell_N(x^*_N) + \sum_{k=0}^{N-1} \ell_k(x^*_k, u^*_k) \leq \ell_N(x_N) + \sum_{k=0}^{N-1} \ell_k(x_k, u_k)
\]
for all control trajectories $\*u := (u_0, u_1, \dots, u_{N-1})$ with $\lVert \*u - \*u^* \rVert < \epsilon$. Here, $\*x^*$ and $\*x$ are the state trajectories resulting from applying control inputs $\*u^*$ and $\*u$, respectively.
\end{definition}

Obviously, a locally optimal control trajectory corresponds to local optima of~\eqref{eq:LCCFTOC} and~\eqref{eq:OCP}. The example in Section~\ref{subsec:Example} illustrates that the reverse does not hold: there may exist local optima for~\eqref{eq:LCCFTOC} that do not correspond to locally optimal control trajectories in~\eqref{eq:OCP}.
We immediately obtain necessary and sufficient conditions for a state and input trajectory $(\*x^*,\*u^*)$ to be locally optimal.

\begin{theorem}[optimality conditions for input trajectories] \label{thm:OptimalInputTrajectories}
Given an input trajectory $\*u^* \in \R^{N n_u}$ and its corresponding state trajectory $\*x^* \in \R^{(N+1) n_x}$ for an LC model~\eqref{eq:LCModel} satisfying Assumptions~\mbox{\ref{ass:WellPosedness}--\ref{ass:NontrivialSolutions}}, $\*u^*$ is locally optimal per Definition~\ref{def:OptimalTrajectories} if and only if $\*v^* := ( \*u^*, \*x^*, \*w)$ is S-stationary for all $\*w \in \mathcal{M}(\*u^*, \*x^*)$.
\end{theorem}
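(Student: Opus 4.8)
The plan is to relate local optimality of the control trajectory $\*u^*$ (Definition~\ref{def:OptimalTrajectories}) to S-stationarity of the full MPCC variable $\*v^* = (\*u^*,\*x^*,\*w)$ for every consistent $\*w$, exploiting Theorem~\ref{thm:OptimalityConditions} and the polyhedral structure of $\mathcal{M}(\*u^*,\*x^*)$ established in Lemma~\ref{lem:PolyhedralSolutionSet}. For the ``only if'' direction, suppose $\*u^*$ is locally optimal in the sense of Definition~\ref{def:OptimalTrajectories}. I would argue that then, for any $\*w \in \mathcal{M}(\*u^*,\*x^*)$, the point $\*v^* = (\*u^*,\*x^*,\*w)$ is a local minimizer of~\eqref{eq:LCCFTOC}: any feasible $\*v = (\*u,\*x,\*w')$ close to $\*v^*$ has $\*u$ close to $\*u^*$ (projection is Lipschitz / norm-nonexpansive), and $\*x$, $\*x^*$ are then the state trajectories generated by $\*u$, $\*u^*$ respectively, since the successor state is uniquely determined by Assumption~\ref{ass:WellPosedness}; hence the cost inequality from Definition~\ref{def:OptimalTrajectories} applies and gives $J(\*v^*) \le J(\*v)$, because $\*w$ does not enter the cost. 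Since $\*v^*$ is then a local optimum of~\eqref{eq:LCCFTOC}, Theorem~\ref{thm:OptimalityConditions} yields that $\*v^*$ admits a primal-dual KKT pair, i.e.\ is S-stationary; and this holds for every $\*w \in \mathcal{M}(\*u^*,\*x^*)$.

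For the ``if'' direction, assume $\*v^* = (\*u^*,\*x^*,\*w)$ is S-stationary for all $\*w \in \mathcal{M}(\*u^*,\*x^*)$. By Theorem~\ref{thm:OptimalityConditions} each such $\*v^*$ is a local minimizer of~\eqref{eq:LCCFTOC}, so there is an $\epsilon_{\*w} > 0$ with $J(\*v^*) \le J(\*v)$ for all feasible $\*v$ within $\epsilon_{\*w}$ of $(\*u^*,\*x^*,\*w)$. I now want to produce a single $\epsilon > 0$ valid uniformly over $\mathcal{M}(\*u^*,\*x^*)$ and controlling only the $\*u$-distance. The key structural fact is that $\mathcal{M}(\*u^*,\*x^*)$ is a (nonempty, by Assumption~\ref{ass:NontrivialSolutions}) \emph{compact} convex polytope: by Lemma~\ref{lem:PolyhedralSolutionSet} it is the cartesian product over stages and sub-blocks of sets $\{w \in \R^n_+ \mid q^\top(w - \bar w) = 0,\ m^\top(w-\bar w)=0\}$, and compactness follows because $q$ has a strictly negative component (Assumption~\ref{ass:NontrivialSolutions}) which together with $w \geq 0$ and $q^\top w$ fixed bounds the relevant coordinates — one then checks the remaining coordinates are bounded using the defining equalities, or invokes well-posedness. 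On a compact set one can pass from the pointwise $\epsilon_{\*w}$ to a uniform $\epsilon$ via a finite subcover argument (compactness of $\mathcal{M}$ plus a Lebesgue-number / continuity-of-$J$ argument). Then given any $\*u$ with $\lVert \*u - \*u^* \rVert$ small, let $\*x$ be its state trajectory and pick any $\*w' \in \mathcal{M}(\*u,\*x)$; by convexity of $\mathcal{M}(\*u^*,\*x^*)$ and continuity of the LCP solution set in the parameters (Lemma~\ref{lem:PolyhedralSolutionSet} again, since varying $(x,u)$ only shifts $q$), one can find a nearby $\*w \in \mathcal{M}(\*u^*,\*x^*)$ so that $(\*u,\*x,\*w')$ lies within $\epsilon$ of $(\*u^*,\*x^*,\*w)$; applying the local-optimality inequality at that $\*w$ and using again that $\*w$ does not enter the cost gives the required bound $\ell_N(x^*_N) + \sum_k \ell_k(x^*_k,u^*_k) \le \ell_N(x_N) + \sum_k \ell_k(x_k,u_k)$.

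The main obstacle is the uniformity/continuity argument in the ``if'' direction: I must rule out the pathology where, as $\*u \to \*u^*$, the consistent complementarity sets $\mathcal{M}(\*u,\*x)$ drift away from $\mathcal{M}(\*u^*,\*x^*)$ so that no nearby $\*v^*$ with valid local-optimality radius can be found — precisely the phenomenon the Example in Section~\ref{subsec:Example} warns about, where feasible $\*w$ jump by a finite amount. Making this rigorous requires a semicontinuity statement for the polyhedral map $(\*u,\*x) \mapsto \mathcal{M}(\*u,\*x)$; since all data are affine and the solution sets are explicitly described in Lemma~\ref{lem:PolyhedralSolutionSet}, outer semicontinuity is automatic and inner semicontinuity at $(\*u^*,\*x^*)$ can be obtained from Assumption~\ref{ass:NontrivialSolutions} (which keeps the active index structure from collapsing) — but some care is needed to phrase this cleanly, and it is where I would concentrate the technical work.
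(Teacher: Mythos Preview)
Your overall strategy matches the paper's: the paper's proof is two sentences---it asserts that local optimality of $\*u^*$ in the sense of Definition~\ref{def:OptimalTrajectories} is equivalent to $(\*u^*,\*x^*,\*w)$ being a local optimum of~\eqref{eq:LCCFTOC} for every $\*w\in\mathcal{M}(\*u^*,\*x^*)$, and then invokes Theorem~\ref{thm:OptimalityConditions}. You are attempting to supply the details of that equivalence, which the paper leaves implicit.

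There is, however, a genuine gap in your ``if'' direction: the compactness claim for $\mathcal{M}(\*u^*,\*x^*)$ is false under Assumptions~\ref{ass:WellPosedness}--\ref{ass:NontrivialSolutions} alone. Take a single block with $m=(1,-1)^\top$ and $q=(-1,1)^\top$; Assumption~\ref{ass:NontrivialSolutions} holds, yet $\sol{q}{M}=\{(b+1,b):b\ge 0\}$ is unbounded. Your argument ``$q$ has a strictly negative component, so $w\ge 0$ and $q^\top w$ fixed bound the relevant coordinates'' fails precisely here: $-w^{[1]}+w^{[2]}=-1$ with $w\ge 0$ bounds neither coordinate. (In the PWA-derived models of Section~\ref{sec:PWASystems} one has $m\propto\one>0$, and then $m^\top w$ fixed with $w\ge 0$ \emph{does} give boundedness---but the theorem is stated for the general class.) Consequently the finite-subcover passage to a uniform $\epsilon$ breaks down. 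The inner-semicontinuity route you sketch at the end is also not available: the example in Section~\ref{subsec:Example} already shows $\mathcal{M}$ is not inner semicontinuous, since for $u_0=-1+\epsilon$ the unique solution is $w_0=(0,1)$, which does not approach $(1,0)\in\mathcal{M}(-1,-1)$.

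A clean repair that avoids compactness is to argue the contrapositive. If $\*u^*$ is not locally optimal, pick $\*u_k\to\*u^*$ with strictly lower cost and select $\*w_k\in\mathcal{M}(\*u_k,\*x_k)$ using a Lipschitz single-valued selection of the polyhedral multifunction $q\mapsto\sol{q}{M}$ (for $M=M^\top\succeq 0$ the solution set is the polyhedron of Lemma~\ref{lem:PolyhedralSolutionSet}, and polyhedral multifunctions admit Lipschitz selections, e.g.\ the least-norm solution). Then $\*w_k\to\*w^0\in\mathcal{M}(\*u^*,\*x^*)$ by closedness of the graph, so $(\*u^*,\*x^*,\*w^0)$ is not a local optimum of~\eqref{eq:LCCFTOC}, hence not S-stationary by Theorem~\ref{thm:OptimalityConditions}. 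This yields exactly the witness $\*w$ the theorem requires, and in the example it produces $\*w^0=(0,1)$, i.e.\ Case~3.
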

\begin{proof}
The input trajectory $\*u^*$ being locally optimal is equivalent to $\*v^*$ being a local optimum of~\eqref{eq:LCCFTOC} for all $\*w \in \mathcal{M}(\*u^*, \*x^*)$. The result follows from Theorem~\ref{thm:OptimalityConditions}.
\end{proof}

A number of sufficient conditions for an input trajectory $\*u^*$ to be locally optimal can be derived which are very easy to check.
The proofs of these results are  straightforward. The first Corollary mirrors the sufficient conditions in Theorem~\ref{thm:SufficientFirstOrderCondition} for a point to be globally optimal in~\eqref{eq:LCCFTOC}.

\begin{corollary}[globally optimal control trajectories] \label{cor:GloballyOptimalTrajectories}
Let $\*v^*=(\*u^*,\*x^*,\*w^*)$ be an S-stationary point for~\eqref{eq:LCCFTOC} and let the MPCC multipliers satisfy the conditions~\eqref{eq:SufficientOptimalityConditions} in Theorem~\ref{thm:SufficientFirstOrderCondition}. Then $\*u^*$ is a globally optimal input trajectory.
\end{corollary}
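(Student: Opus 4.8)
The plan is to bootstrap from the global optimality statement already available in Theorem~\ref{thm:SufficientFirstOrderCondition} and then pass from the lifted problem~\eqref{eq:LCCFTOC} to the genuine hybrid optimal control problem~\eqref{eq:OCP} via the projection relationship between their feasible sets. First I would invoke Theorem~\ref{thm:SufficientFirstOrderCondition}: since $\*v^*$ is S-stationary and the MPCC multipliers satisfy~\eqref{eq:SufficientOptimalityConditions}, the point $\*v^* = (\*u^*,\*x^*,\*w^*)$ is a \emph{globally} optimal solution of~\eqref{eq:LCCFTOC}, i.e.\ $\ell_N(x_N^*) + \sum_{k=0}^{N-1}\ell_k(x_k^*,u_k^*) \le \ell_N(x_N) + \sum_{k=0}^{N-1}\ell_k(x_k,u_k)$ for every $\*v = (\*u,\*x,\*w) \in \mathcal{Q}$.

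Next I would exploit that the cost in~\eqref{eq:LCMPCCost} does not depend on $\*w$, so the objective value at any feasible $\*v=(\*u,\*x,\*w)\in\mathcal{Q}$ coincides with the objective of~\eqref{eq:OCP} evaluated at $(\*u,\*x)\in\mathcal{U}$. Now take an arbitrary control input trajectory $\*u$ and let $\*x$ be the (unique, by Assumption~\ref{ass:WellPosedness}) state trajectory it generates through the LC dynamics~\eqref{eq:LCModel} from $x_0$; because the dynamics~\eqref{eq:LCMPCDynamicAssignment}--\eqref{eq:LCMPCComplementarity} are realized by \emph{some} complementarity trajectory, $\mathcal{M}(\*u,\*x)$ is nonempty, so picking any $\*w\in\mathcal{M}(\*u,\*x)$ yields $(\*u,\*x,\*w)\in\mathcal{Q}$ and hence $(\*u,\*x)\in\mathcal{U}$. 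Global optimality of $\*v^*$ in~\eqref{eq:LCCFTOC} then gives $\ell_N(x_N^*) + \sum_{k=0}^{N-1}\ell_k(x_k^*,u_k^*) \le \ell_N(x_N) + \sum_{k=0}^{N-1}\ell_k(x_k,u_k)$. Since $\*u$ was arbitrary, $\*u^*$ minimizes the objective of~\eqref{eq:OCP} over all input trajectories, so it is in particular locally optimal per Definition~\ref{def:OptimalTrajectories} (with $\epsilon$ chosen arbitrarily), and moreover globally optimal as an input trajectory.

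There is no genuine obstacle here; the only point requiring a line of care is the passage from $\mathcal{Q}$ to $\mathcal{U}$ — one must note that $\mathcal{U}$ is exactly the projection of $\mathcal{Q}$ onto the $(\*u,\*x)$-coordinates, so that every candidate competitor $(\*u,\*x)$ for~\eqref{eq:OCP} admits a lift into $\mathcal{Q}$ against which the global optimum $\*v^*$ can be compared, and that the $\*w$-independence of the cost makes this comparison meaningful. The rest is immediate from Theorem~\ref{thm:SufficientFirstOrderCondition}.
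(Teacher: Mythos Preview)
Your proposal is correct and follows essentially the same approach as the paper, which simply remarks that the result ``mirrors the sufficient conditions in Theorem~\ref{thm:SufficientFirstOrderCondition}'' and calls the proof straightforward. Your write-up is in fact more explicit than the paper's own treatment, carefully spelling out the projection from $\mathcal{Q}$ to $\mathcal{U}$ and the $\*w$-independence of the cost that the paper leaves implicit.
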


Specializing Theorem~\ref{thm:IsolatedMinimizer} to optimal control inputs yields the next result.
\begin{corollary}[isolated control trajectories are optimal] \label{cor:OptimalIsolatedTrajectories}
Let $\*v^*=(\*u^*,\*x^*,\*w^*)$ be an S-stationary point for~\eqref{eq:LCCFTOC} and let M-SSOSC hold at $\*v^*$. Then $\*u^*$ is an isolated locally optimal input trajectory.
\end{corollary}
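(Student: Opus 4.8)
I would obtain Corollary~\ref{cor:OptimalIsolatedTrajectories} by combining Theorem~\ref{thm:IsolatedMinimizer}, Theorem~\ref{thm:OptimalInputTrajectories}, and continuity of the LC dynamics; the bulk of the work has already been done in those results, so this is mostly bookkeeping. The first step is to note that the hypotheses of the corollary are exactly those of Theorem~\ref{thm:IsolatedMinimizer}, which therefore supplies two facts at once: (i) $\*v^*$ is an isolated local minimizer of~\eqref{eq:LCCFTOC}, i.e.\ there is an $\epsilon > 0$ such that $\ell_N(x^*_N) + \sum_{k=0}^{N-1}\ell_k(x^*_k,u^*_k) < \ell_N(x_N) + \sum_{k=0}^{N-1}\ell_k(x_k,u_k)$ for every feasible $\*v = (\*u,\*x,\*w) \neq \*v^*$ with $\lVert \*v - \*v^* \rVert \leq \epsilon$; and (ii) $\mathcal{M}(\*u^*,\*x^*) = \{\*w^*\}$.

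Next I would use (ii) together with Theorem~\ref{thm:OptimalInputTrajectories} to conclude that $\*u^*$ is a locally optimal input trajectory in the sense of Definition~\ref{def:OptimalTrajectories}: since $\mathcal{M}(\*u^*,\*x^*)$ is the singleton $\{\*w^*\}$ and $\*v^*$ is S-stationary by assumption, $(\*u^*,\*x^*,\*w)$ is S-stationary for \emph{every} $\*w \in \mathcal{M}(\*u^*,\*x^*)$, which is precisely the characterization in Theorem~\ref{thm:OptimalInputTrajectories}.

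For the \emph{isolated} part, I would take an arbitrary input trajectory $\*u \neq \*u^*$ close to $\*u^*$, let $\*x = \*x(\*u)$ be the state trajectory it generates from $x_0$ via the LC dynamics (well defined by Assumption~\ref{ass:WellPosedness}, so $\mathcal{M}(\*u,\*x) \neq \emptyset$), and pick any consistent $\*w \in \mathcal{M}(\*u,\*x)$, so that $\*v := (\*u,\*x,\*w)$ is feasible for~\eqref{eq:LCCFTOC} with $\*v \neq \*v^*$ (because $\*u \neq \*u^*$). Since the dynamics map $f$ is Lipschitz continuous, $\*x \to \*x^*$ as $\*u \to \*u^*$; and since by (ii) the solution set of the stagewise monotone LCP~\eqref{eq:LCMPCComplementarity} is single-valued at $(\*u^*,\*x^*)$, upper semicontinuity of that solution multifunction forces every $\*w \in \mathcal{M}(\*u,\*x)$ toward $\*w^*$ as well. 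Hence for $\lVert \*u - \*u^* \rVert$ small enough, $\lVert \*v - \*v^* \rVert \leq \epsilon$, and (i) gives that the cost along $\*u$ strictly exceeds that along $\*u^*$. This is exactly the strict version of Definition~\ref{def:OptimalTrajectories}, i.e.\ $\*u^*$ is an isolated locally optimal input trajectory in the same sense in which Theorem~\ref{thm:IsolatedMinimizer} uses the word ``isolated''.

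The main obstacle is the continuity step in the last paragraph: showing that every $\*w \in \mathcal{M}(\*u,\*x(\*u))$ stays close to $\*w^*$ as $\*u \to \*u^*$. This is where single-valuedness from Theorem~\ref{thm:IsolatedMinimizer} and the structure of Assumption~\ref{ass:DecomposablePSDRankOne} ($E_w = E_w^\top \succeq 0$, so the complementarity problem is monotone) are essential: by Lemma~\ref{lem:PolyhedralSolutionSet} the solution set is a polytope, and being a singleton at $(\*u^*,\*x^*)$ it is locally bounded there, so together with the (trivial) closed-graph property the solution multifunction is upper semicontinuous at $(\*u^*,\*x^*)$ — which is all that is needed to push $\*v$ into the $\epsilon$-ball of $\*v^*$ and invoke (i). Everything else reduces to the previously established theorems, consistent with the remark that the proofs of these corollaries are straightforward.
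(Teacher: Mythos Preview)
Your argument is correct in outline and matches the paper's intent: the paper offers no proof beyond the remark that the corollary follows by ``specializing Theorem~\ref{thm:IsolatedMinimizer} to optimal control inputs,'' and you have filled in precisely those details (step~2, invoking Theorem~\ref{thm:OptimalInputTrajectories}, is in fact redundant, since the strict inequality you derive in step~3 already implies local optimality).

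One technical point deserves tightening. The inference ``being a singleton at $(\*u^*,\*x^*)$ it is locally bounded there'' is not valid for an arbitrary closed-graph multifunction: take $F(0)=\{0\}$ and $F(t)=\{0,\,1/t\}$ for $t\neq 0$, which has closed graph and is single-valued at $0$ but is not locally bounded there. What rescues your argument is the \emph{polyhedral} structure: the map $q \mapsto \sol{q}{M}$ is a polyhedral multifunction (its graph is a finite union of polyhedra indexed by active sets), and by Robinson's theorem every polyhedral multifunction is upper Lipschitz at each point of its domain. Combined with single-valuedness at $(\*u^*,\*x^*)$ from Theorem~\ref{thm:IsolatedMinimizer}, this gives exactly the continuity you need to force $\*v$ into the $\epsilon$-ball around $\*v^*$ and invoke (i).
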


While isolated local minimizers always have unique complementarity variables (see Theorem~\ref{thm:IsolatedMinimizer}) other (non-isolated) local minimizers can have the same property. Hence, from Definition~\ref{def:OptimalTrajectories} follows Corollary~\ref{cor:OptimalWithUniqueInnerMultipliers}.
\begin{corollary}[unique complementarity variables imply optimality] \label{cor:OptimalWithUniqueInnerMultipliers}
Let $\*v^*=(\*u^*,\*x^*,\*w^*)$ be an S-stationary solution for~\eqref{eq:LCCFTOC} and let $\mathcal{M}(\*u^*, \*x^*)$ be a singleton, i.e.\ $\*w^*$ is the unique complementarity variable trajectory consistent with $\*u^*$ and $\*x^*$. Then $\*u^*$ is a locally optimal input trajectory.
\end{corollary}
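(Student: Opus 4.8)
The plan is to derive this corollary directly from Theorem~\ref{thm:OptimalInputTrajectories}, which already gives an \emph{if and only if} characterization of local optimality of an input trajectory in terms of S-stationarity of the lifted points $(\*u^*,\*x^*,\*w)$ as $\*w$ ranges over $\mathcal{M}(\*u^*,\*x^*)$. No new geometric or constraint-qualification work is needed: the entire content of the corollary is that the universal quantifier over $\mathcal{M}(\*u^*,\*x^*)$ appearing in that theorem becomes vacuous when the set is a singleton.

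Concretely, I would first observe that by hypothesis $\mathcal{M}(\*u^*,\*x^*)=\{\*w^*\}$, since $\*w^*$ is feasible and is assumed to be the unique complementarity trajectory consistent with $(\*u^*,\*x^*)$. The condition ``$(\*u^*,\*x^*,\*w)$ is S-stationary for every $\*w\in\mathcal{M}(\*u^*,\*x^*)$'' then reduces to the single requirement that $(\*u^*,\*x^*,\*w^*)$ be S-stationary, which is precisely the standing assumption of the corollary. Applying the ``if'' direction of Theorem~\ref{thm:OptimalInputTrajectories} then yields that $\*u^*$ is locally optimal per Definition~\ref{def:OptimalTrajectories}, completing the argument.

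There is essentially no obstacle here beyond correctly invoking the earlier machinery: the delicate parts — the intersection property (Lemma~\ref{lem:IntersectionProperty}), the resulting Guignard constraint qualification, and the equivalence of S-stationarity with the classical KKT conditions (Theorem~\ref{thm:OptimalityConditions}) — are all already in place, and Theorem~\ref{thm:OptimalInputTrajectories} packages them for the control setting. If one instead wanted a self-contained argument, one would pick an arbitrary $\*u$ near $\*u^*$, note that the induced state trajectory $\*x$ and some feasible $\*w\in\mathcal{M}(\*u,\*x)$ exist by Assumptions~\ref{ass:WellPosedness} and~\ref{ass:NontrivialSolutions}, and then use that such a $\*w$ can be taken arbitrarily close to $\*w^*$ (continuity of the LCP solution near the point where it is unique, via Lemma~\ref{lem:PolyhedralSolutionSet}) to transfer local optimality of $\*v^*$ in $(\*u,\*x,\*w)$-space to local optimality in $\*u$-space. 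Since this is strictly more work than the two-line reduction above, I would not pursue it.
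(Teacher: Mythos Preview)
Your proposal is correct and matches the paper's approach. The paper itself gives no explicit proof beyond the remark that the corollary follows from Definition~\ref{def:OptimalTrajectories}; your reduction via Theorem~\ref{thm:OptimalInputTrajectories} is precisely the natural formalization of that hint, since Theorem~\ref{thm:OptimalInputTrajectories} is just the packaging of Definition~\ref{def:OptimalTrajectories} together with Theorem~\ref{thm:OptimalityConditions}.
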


Theorem~3.1.7(b) in~\cite{Cottle1992} can be used to determine whether the given complementarity variable trajectory $\*w^*$ is unique using the determinant of a particular submatrix of $I_N \otimes E_w$. From Corollary~\ref{cor:StrictComplementarity} we know that $\beta = \emptyset$ is a necessary condition for $\mathcal{M}(\*u^*, \*x^*)$ to be a singleton.

\section{Inverse Optimization Modeling of Piecewise Affine Systems}
\label{sec:PWASystems}

In this section we will show how all the results from the previous sections can be applied to hybrid dynamical systems in continuous piecewise affine (PWA) form.
Their state dynamics are given as
\begin{align}
x^+ = f(x,u) =  A_i x + B_i u + c_i \quad \text{for} \quad (x,u) \in \Omega_i, \label{eq:PWAModel}
\end{align}
where $x \in \R^{n_x}$ is the system state and $u \in \R^{n_u}$ the control input. The $n_r$ regions $\Omega_i \subseteq \R^{n_x} \times \R^{n_u}$ form a partition of the domain $\Omega$ of the PWA system~\eqref{eq:PWAModel}, i.e.\ $\int(\Omega_i) \cap \int(\Omega_j) = \emptyset$ for $i \neq j$ and $\bigcup_{i=1}^{n_r} \Omega_i = \Omega$. We assume the system dynamics to be continuous across region boundaries, i.e.
\[
A_i x + B_i u + c_i = A_j x + B_j u + c_j
\]
for all $x \in \Omega_i \cap \Omega_j$. %

The finite horizon optimal control problem for PWA system~\eqref{eq:PWAModel} is given as
\begin{subequations}
\label{eq:PWACFTOC}
\begin{align}
\min_{\*u,\*x} \quad  &{\ell}_N(x_N) + \sum_{k=0}^{N-1} {\ell}_k(x_k,u_k) &\label{eq:PWAMPCCost} \\
\text{s.t.} \quad &\forall k = 0,\dots,{N-1} \colon
\ifTwoColumn \nonumber \\  \else & \fi
&x_{k+1} =  A_i x_k + B_i u_k + c_i \enspace \text{for} \enspace (x_k,u_k) \in \Omega_i, \label{eq:PWAMPCDynamicAssignment}
\end{align}
\end{subequations}
Here, $\*u := (u_0, \dots, u_{N-1})$ and $\*x := (x_0, \dots, x_{N})$ are the input and state trajectory from an initial state $x_0$ as in~\eqref{eq:LCCFTOC}.
These problems are typically solved with mixed-integer programming (MIP) approaches based on an MLD reformulation of the PWA dynamics~\cite{Bemporad1999}. Other solution approaches based on nonlinear programming~\cite{DeSchutter2001}, the solution of a series of linear programs~\cite{DeSchutter2004}, or the alternating direction method of multipliers~\cite{Frick2016} have also been proposed.

While the equivalence between  PWA models~\eqref{eq:PWAModel} and LC models~\eqref{eq:LCModel} has been known in the literature for quite some time, the LC models resulting from the derivations in~\cite{Heemels2001} will not satisfy Assumptions~\ref{ass:DecomposablePSDRankOne} and~\ref{ass:NontrivialSolutions} and require an MLD model as an intermediate step. The authors in~\cite{Heemels2001} point out that there will generally be a multitude of LC models~\eqref{eq:LCModel} corresponding to a given PWA model~\eqref{eq:PWAModel}. Which of these LC models is ``best'' will depend on the application.

Recent results from inverse optimization~\cite{Hempel2015a,Hempel2015} provide a direct link between PWA models~\eqref{eq:PWAModel} and LC models~\eqref{eq:LCModel}. This approach is based on the difference of convex functions~\cite{Horst1999} and in most cases of interest provides very compact LC models~\cite[Lem.~4]{Hempel2015a}. We can show that following this approach will lead to an LC model that satisfies Assumptions~\ref{ass:WellPosedness}--\ref{ass:NontrivialSolutions}.

We will later make use of the results in~\cite{Hempel2015} to represent the PWA system~\eqref{eq:PWAModel} as an optimizing process.
To this end we will require the following result on the representation of a continuous PWA function $\psi \colon \R^{\hat m} \to \R^{\hat n}$ with only convex component functions $\psi^{[i]} \colon \R^{\hat m} \to \R$ as the optimal solution to a parametric quadratic program:
\begin{subequations}
\begin{lemma}[convex PWA function as solution to PQP] \label{lem:CvxPWAFctAsPQP}
Let $\psi \colon \R^{\hat m} \to \R^{\hat n}$ be continuous PWA and such that $\psi^{[i]} \colon \R^{\hat m} \to \R$ is convex for all $i \in \Nn{\hat n}$. Then
\begin{align}
\psi(p) \in \arg \min_{y \in \R^{\hat n}} \enspace \frac{1}{2} \left\lVert y - \bar{\psi}(p) \right\rVert^2_Q \enspace \text{s.t.} \enspace y \geq \psi(p) \label{eq:PWAasPQPSolutionLemma}
\end{align}
for any diagonal matrix $Q \succ 0$ and an affine function $\bar{\psi} \colon \R^{\hat m} \to \R^{\hat n}$ with
\begin{equation}
\bar{\psi} (p) \leq \psi(p) \quad \forall p. \label{eq:UncOptimizerSupportAss}
\end{equation}
Furthermore, the $\arg \min$ is a singleton, i.e.\ $\psi(p)$ is the unique optimizer for~\eqref{eq:PWAasPQPSolutionLemma}.
\end{lemma}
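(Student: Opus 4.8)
The plan is to exploit the \emph{separable} structure of the parametric quadratic program \eqref{eq:PWAasPQPSolutionLemma}. Since $Q \succ 0$ is diagonal, write $Q = \diag(q^{[1]},\dots,q^{[\hat n]})$ with each $q^{[i]} > 0$; then the objective decomposes as $\frac12 \sum_{i=1}^{\hat n} q^{[i]}\bigl(y^{[i]} - \bar\psi^{[i]}(p)\bigr)^2$ and the constraint $y \geq \psi(p)$ is the componentwise family $y^{[i]} \geq \psi^{[i]}(p)$. Hence the program splits into $\hat n$ independent scalar problems
\[
\min_{y^{[i]} \in \R} \; \tfrac12 q^{[i]}\bigl(y^{[i]} - \bar\psi^{[i]}(p)\bigr)^2 \quad \text{s.t.} \quad y^{[i]} \geq \psi^{[i]}(p),
\]
one for each $i \in \Nn{\hat n}$, and it suffices to solve each of these and reassemble.

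First I would observe that each scalar problem minimizes a strictly convex univariate parabola (strict because $q^{[i]} > 0$) over the half-line $\bigl[\psi^{[i]}(p), \infty\bigr)$. Its unconstrained vertex sits at $y^{[i]} = \bar\psi^{[i]}(p)$, and by hypothesis \eqref{eq:UncOptimizerSupportAss} we have $\bar\psi^{[i]}(p) \leq \psi^{[i]}(p)$, so the vertex lies on or to the left of the left endpoint of the feasible interval. Consequently the parabola is nondecreasing on the entire feasible half-line, its minimum over $\bigl[\psi^{[i]}(p),\infty\bigr)$ is attained at the left endpoint, and strict convexity makes that minimizer unique. This yields $y^{[i]} = \psi^{[i]}(p)$ as the unique solution of the $i$-th scalar problem; stacking the components shows that $\psi(p)$ is the unique optimizer of \eqref{eq:PWAasPQPSolutionLemma}, which is exactly both assertions of the lemma.

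I would also remark, for use in the subsequent construction, that convexity of each $\psi^{[i]}$ is what makes \eqref{eq:PWAasPQPSolutionLemma} a genuine \emph{parametric quadratic program with affine constraints}: a convex PWA function is the pointwise maximum of finitely many affine functions, so $y^{[i]} \geq \psi^{[i]}(p)$ is equivalent to finitely many jointly affine inequalities in $(y^{[i]},p)$, and an affine minorant $\bar\psi$ satisfying \eqref{eq:UncOptimizerSupportAss} exists trivially (take one affine piece of each component and stack). None of this enters the optimality argument itself, however. I do not anticipate a real obstacle here: the only point needing a line of care is that \eqref{eq:UncOptimizerSupportAss} is non-strict, so one should note that the degenerate case $\bar\psi^{[i]}(p) = \psi^{[i]}(p)$ still gives the endpoint as the \emph{unique} minimizer, since the vertex of a strictly convex parabola is its unique global minimizer.
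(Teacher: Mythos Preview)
Your proof is correct and follows essentially the same approach as the paper: decouple \eqref{eq:PWAasPQPSolutionLemma} into $\hat n$ scalar problems using the diagonal structure of $Q$, then observe that each scalar problem amounts to projecting $\bar\psi^{[i]}(p)$ onto the half-line $[\psi^{[i]}(p),\infty)$, with \eqref{eq:UncOptimizerSupportAss} forcing the minimizer to be the left endpoint. The paper's version is terser (it simply calls this a projection onto the epigraph and invokes \eqref{eq:UncOptimizerSupportAss}), but the argument is the same.
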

\end{subequations}
\begin{proof}
Since $Q \succ 0$ is diagonal~\eqref{eq:PWAasPQPSolutionLemma} decouples into scalar optimization problems in $y^{[i]}$ as follows:
\ifTwoColumn
\begin{align*}
\psi^{[i]}(p) \in \arg \min_{y^{[i]} \in \R} \quad &\frac{1}{2} Q^{[i,i]} \left( y^{[i]} - \bar{\psi}^{[i]}(p) \right)^2 \\
 \text{s.t.} \quad &y^{[i]} \geq \psi^{[i]}(p)
\end{align*}
\else
\[
\psi^{[i]}(p) \in \arg \min_{y^{[i]} \in \R} \enspace \frac{1}{2} Q^{[i,i]} \left( y^{[i]} - \bar{\psi}^{[i]}(p) \right)^2 \enspace \text{s.t.} \enspace y^{[i]} \geq \psi^{[i]}(p)
\]
\fi
It is easy to see that this is simply the projection of $\bar{\psi}^{[i]}(p)$ onto the (convex) epigraph of $\psi^{[i]}(p)$. The result follows immediately from~\eqref{eq:UncOptimizerSupportAss}.
\end{proof}
Lemma~\ref{lem:CvxPWAFctAsPQP} is a variant of~\cite[Lem.~2]{Hempel2013} which considered a parametric \emph{linear} program in~\eqref{eq:PWAasPQPSolutionLemma}. Note that we can always find an affine  function satisfying~\eqref{eq:UncOptimizerSupportAss} due to the convex component functions of $\psi$, cf.~\cite[Thm.~8.13]{Rockafellar1998}.

We will adapt the procedure outlined in~\cite{Hempel2015} to derive a so-called \emph{inverse optimization model} for a given PWA system~\eqref{eq:PWAModel}.
To this end we use the fact that scalar-valued continuous PWA functions can be written as the difference of two convex PWA functions~\cite{Kripfganz1987,Hempel2013}. For the PWA dynamics~\eqref{eq:PWAModel} this means $f(x,u) = \psi(x,u) - \phi(x,u)$ where~
\begin{subequations}
 \label{eq:CvxConcaveParts}
\begin{align}
\psi(x,u) &= A_{y,j} x + B_{y,j} u + c_{y,j} & \text{for} \enspace (x,u) &\in \Omega_j^y, \label{eq:ConvexPart} \\
\phi(x,u) &= A_{z,k} x + B_{z,k} u + c_{z,k} & \text{for} \enspace (x,u) &\in \Omega_k^z \label{eq:ConcavePart}
\end{align}
\end{subequations}
are continuous PWA functions over the domain $\Omega$ of the PWA system and the component functions $\psi^{[i]}, \phi^{[i]} \colon \R^{n_x} \times \R^{n_u} \to \R$ are convex for all $i \in \Nn{n_x}$. The reader is referred to~\cite{Hempel2013,Hempel2015a} for details on the computation of the convex decomposition including the matrices $A_{y,j}$, $B_{y,j}$ et alia. Note that this \emph{convex decomposition} of $f$ is not unique. The regions $\Omega_j^y$ and $\Omega_k^z$ also form partitions of the domain for $j \in \Nn{n_r^y}$ and $k \in \Nn{n_r^z}$. They may differ from the original regions $\Omega_i$, unless the original regions form a so-called \emph{regular partition}~\cite[Lem.~2]{Hempel2015}. The reader is referred to~\cite{DeLoera2010} for a comprehensive treatment of regular partitions; typical examples are Delaunay triangulations, Voronoi diagrams, and related partitions.

We can apply Lemma~\ref{lem:CvxPWAFctAsPQP} to the PWA functions $\psi$ and $\phi$ in~\eqref{eq:CvxConcaveParts} that make up the PWA dynamics~\eqref{eq:PWAModel} to obtain the following inverse optimization model of~\eqref{eq:PWAModel}:
\begin{subequations}
\label{eq:IPQPModel}
\begin{align}
x^+  &= \hat y - \hat z \label{eq:IPQPTransition} \\
\hat y &\in \arg \min_{y \in \R^{n_x}} \enspace  \frac{1}{2} \left\lVert y - \bar{\psi}(x,u) \right\rVert^2_{Q_y} \label{eq:IPQPConvexCost} \\
&\text{s.t.} \enspace y \geq A_{y,j} x + B_{y,j} u + c_{y,j} \enspace \forall j \in \Nn{n_r^y}\label{eq:IPQPConvexConstraints} \\
\hat z &\in \arg \min_{z \in \R^{n_x}} \enspace  \frac{1}{2} \left\lVert z - \bar{\phi}(x,u) \right\rVert^2_{Q_z} \label{eq:IPQPConcaveCost} \\
&\text{s.t.} \enspace z \geq A_{z,k} x + B_{z,k} u + c_{z,k} \enspace \forall k \in \Nn{n_r^z}\label{eq:IPQPConcaveConstraints}
\end{align}
\end{subequations}
From Lemma~\ref{lem:CvxPWAFctAsPQP} it follows that $Q_y,Q_z \in \R^{n_x \times n_x}$ are arbitrary positive definite diagonal matrices. The affine functions $\bar{\psi}$ in~\eqref{eq:IPQPConvexCost} and $\bar{\phi}$ in~\eqref{eq:IPQPConcaveCost} are given as
\begin{align*}
\bar{\psi}(x,u) &= A_\psi x + B_\psi u + c_\psi, \\
\bar{\phi}(x,u) &= A_\phi x + B_\phi u + c_\phi,
\end{align*}
and must satisfy~\eqref{eq:UncOptimizerSupportAss} with respect to $\psi$ and $\phi$, respectively.

It follows directly from the construction that the inverse optimization model uses $\hat n = 2 n_x$ decision variables and a strictly convex cost function.
For alternative, possibly more compact, constructions of an inverse optimization model of~\eqref{eq:PWAModel} see~\cite{Hempel2012,Hempel2015,Nguyen2014}. An immediately obvious simplification can be performed in case a component function $f^{[i]}$ of the original PWA dynamics is already convex or concave.

Recall that~\eqref{eq:IPQPModel} represents a remodeling of the PWA dynamics in \eqref{eq:PWAMPCDynamicAssignment} in terms of a parametric optimization problem.    To facilitate the inclusion of this model into our overall optimal control problem~\eqref{eq:PWACFTOC}, we can rewrite~\eqref{eq:IPQPModel} in terms of its KKT conditions, yielding the following linear complementarity model for our system dynamics:
\begin{subequations}
\label{eq:PWAasTwoCvxPWAModels}
\ifTwoColumn
\begin{align}
x^+ - (y - z) &=   0 \label{eq:PWAsTwoCvxPWATransition} \\
Q_y \left( y -  \left(A_\psi x + B_\psi u + c_\psi \right) \right) - \sum_{i=1}^{n_r^y} \lambda_{i} &= 0\label{eq:PWACvxDynamicsStationarity} \\
Q_z \left( z - \left(A_\phi x + B_\phi u + c_\phi \right) \right) - \sum_{j=1}^{n_r^z} \theta_{j} &= 0\label{eq:PWAConcaveDynamicsStationarity} \\
\forall i \in \Nn{n_r^y} \colon ~
 0 \leq y - \left(A_{y,i} x + B_{y,i} u + c_{y,i} \right)   \perp  \lambda_{i} &\geq 0 \label{eq:PWACvxDynamicsComplementarity} \\
\forall j \in \Nn{{n_r^z}} \colon
 0 \leq z - \left(A_{z,j} x + B_{z,j} u + c_{z,j} \right)   \perp  \theta_{j} &\geq 0 \label{eq:PWAConcaveDynamicsComplementarity}
\end{align}
\else
\begin{align}
x^+ &=   y - z, \label{eq:PWAsTwoCvxPWATransition} \\
0 &= Q_y \left( y -  \left(A_\psi x + B_\psi u + c_\psi \right) \right) - \sum_{i=1}^{n_r^y} \lambda_{i} \label{eq:PWACvxDynamicsStationarity}, \\
0 &= Q_z \left( z - \left(A_\phi x + B_\phi u + c_\phi \right) \right) - \sum_{j=1}^{n_r^z} \theta_{j} \label{eq:PWAConcaveDynamicsStationarity}, \\
\forall i \in \Nn{n_r^y} \colon \enspace
 0 &\leq y - \left(A_{y,i} x + B_{y,i} u + c_{y,i} \right)  \enspace \perp \enspace \lambda_{i} \geq 0 \label{eq:PWACvxDynamicsComplementarity}, \\
\forall j \in \Nn{n_r^z} \colon \enspace
 0 &\leq z - \left(A_{z,j} x + B_{z,j} u + c_{z,j} \right)  \enspace \perp \enspace \theta_{j} \geq 0 \label{eq:PWAConcaveDynamicsComplementarity}.
\end{align}
\fi
\end{subequations}
To avoid confusion with the KKT multipliers and MPCC multipliers introduced in Section~\ref{sec:MPC}, we will call the $\lambda_i$ and $\theta_j$ in~\eqref{eq:PWAasTwoCvxPWAModels} \emph{internal multipliers}, i.e.\ multipliers for the lower-level part of the optimal control problem~\eqref{eq:PWACFTOC} when the dynamics have been reformulated as the KKT conditions of a parametric QP. They correspond to the complementarity variable $w$ in~\eqref{eq:LCModel}.

Note that the reformulation~\eqref{eq:PWAasTwoCvxPWAModels} of the PWA system~\eqref{eq:PWAModel} is an LC model~\eqref{eq:LCModel} with a generalized cone complementarity.
It can be shown that all results in Section~\ref{sec:MPC} also hold for the formulation~\eqref{eq:PWAasTwoCvxPWAModels} with the additional auxiliary variables $y$ and $z$~\cite{Hempel2016}. Alternatively, one could eliminate these auxiliary variables to obtain an equivalent, more compact complementarity representation that exactly matches~\eqref{eq:LCModel}:
\begin{subequations}
\label{eq:CompactLCModel}
\ifTwoColumn
\begin{align}
x^+ &= \left(A_\psi - A_\phi\right) x + \left( B_\psi - B_\phi \right) u + c_\psi - c_\phi \label{eq:CompactLCTransition} \\
&\qquad + Q_y^{-1} \sum_{i=1}^{n_r^y} \lambda_i - Q_z^{-1} \sum_{j=1}^{n_r^z} \theta_j , \nonumber \\
0 &\leq w_{\lambda_i} \enspace \perp \enspace \lambda_i \geq 0 \quad \forall i \in \Nn{n_r^y}, \label{eq:CompactComplY} \\
0 &\leq w_{\theta_j} \enspace \perp \enspace \theta_j \geq 0 \quad \forall j \in \Nn{n_r^z }, \label{eq:CompactComplZ}
\end{align}
where the complementarity variables $w_{\lambda_i}$ and $w_{\theta_j}$ are
\begin{align}
&\begin{aligned}
w_{\lambda_i} &= Q_y^{-1} \sum_{j=1}^{n_r^y}  \lambda_{j} + \left( A_\psi - A_{y,i}\right) x  \\
&\qquad + \left( B_\psi -  B_{y,i} \right) u + c_\psi - c_{y,i},
\end{aligned} \label{eq:CompactWY}  \\
&\begin{aligned}
w_{\theta_j} &=  Q_z^{-1} \sum_{i=1}^{n_r^z} \theta_i + \left( A_\phi - A_{z,j} \right) x \\
&\qquad + \left( B_\phi - B_{z,j} \right) u + c_\phi - c_{z,j}.
\end{aligned} \label{eq:CompactWZ}
\end{align}
\else
\begin{align}
x^+ &= \left(A_\psi - A_\phi\right) x + \left( B_\psi - B_\phi \right) u + c_\psi - c_\phi + Q_y^{-1} \sum_{i=1}^{n_r^y} \lambda_i - Q_z^{-1} \sum_{j=1}^{n_r^z} \theta_j \label{eq:CompactLCTransition} \\
w_{\lambda_i} &= Q_y^{-1} \sum_{j=1}^{n_r^y}  \lambda_{j} + \left( A_\psi - A_{y,i}\right) x + \left( B_\psi -  B_{y,i} \right) u + c_\psi - c_{y,i} \label{eq:CompactWY} \\
0 &\leq w_{\lambda_i} \enspace \perp \enspace \lambda_i \geq 0 \quad \forall i \in \Nn{n_r^y} \label{eq:CompactComplY} \\
w_{\theta_j} &=  Q_z^{-1} \sum_{i=1}^{n_r^z} \theta_i + \left( A_\phi - A_{z,j} \right) x + \left( B_\phi - B_{z,j} \right) u + c_\phi - c_{z,j} \label{eq:CompactWZ} \\
0 &\leq w_{\theta_j} \enspace \perp \enspace \theta_j \geq 0 \quad \forall j \in \Nn{n_r^z } \label{eq:CompactComplZ}
\end{align}
\fi
\end{subequations}
What is left to do is to show that it also satisfies the assumptions made at the start of the paper.
To that end, we make the following non-restrictive assumption on the functions $\bar \psi$ and $\bar \phi$:
\begin{subequations}
\label{eq:StrictAffineSupportCondition}
\begin{alignat}{2}
\bar{\psi}(x,u) &< \max_{j\in\Nn{n_r^y}} A_{y,j} x + B_{y,j} u + c_{y,j} &  \quad \forall (x,u) &\in \Omega \label{eq:StrictAffineSupportConditionCvx} \\
\bar{\phi}(x,u) &< \max_{i\in\Nn{n_r^z}} A_{z,i} x + B_{z,i} u + c_{z,i} & \forall (x,u) &\in \Omega \label{eq:StrictAffineSupportConditionConcave}
\end{alignat}
\end{subequations}
While~\eqref{eq:StrictAffineSupportCondition} is stronger than~\eqref{eq:UncOptimizerSupportAss}, it can always be satisfied for a given PWA system, e.g.\ by choosing
any index $ j \in \Nn{n_r^y}$ and $ i \in \Nn{n_r^z}$ and setting
\begin{subequations}
\label{eq:StrictAffineSupportConstruction}
\begin{align}
\bar{\psi}(x,u) &= A_{y, j} x + B_{y, j} u + c_{y, j} - \eta, \\
\bar{\phi}(x,u) &= A_{z, i} x + B_{z, i} u + c_{z, i} - \zeta,
\end{align}
\end{subequations}
with arbitrary $\eta,\zeta > 0$. With this we can prove that our results from Section~\ref{sec:MPC} also apply to properly remodeled PWA systems:

\begin{theorem} \label{thm:LCModelAssumptions}
Any LC model in the form~\eqref{eq:CompactLCModel} satisfying inequalities~\eqref{eq:StrictAffineSupportCondition} also satisfies Assumptions~\ref{ass:WellPosedness},~\ref{ass:DecomposablePSDRankOne}, and~\ref{ass:NontrivialSolutions}.
\end{theorem}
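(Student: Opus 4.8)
The plan is to read off the complementarity data of~\eqref{eq:CompactLCModel} explicitly and then verify the three assumptions one at a time. Collect the complementarity variable as $w := (\lambda_1,\dots,\lambda_{n_r^y},\theta_1,\dots,\theta_{n_r^z})$ with each $\lambda_i,\theta_j\in\R^{n_x}$. Equations~\eqref{eq:CompactWY} and~\eqref{eq:CompactWZ} show that the dependence of the complementarity left-hand sides on $w$ is governed by the block-diagonal matrix
\[
E_w = \begin{pmatrix} \one_{n_r^y\times n_r^y}\otimes Q_y^{-1} & 0 \\ 0 & \one_{n_r^z\times n_r^z}\otimes Q_z^{-1} \end{pmatrix},
\]
while~\eqref{eq:CompactLCTransition} gives $B_w w = Q_y^{-1}\sum_i\lambda_i - Q_z^{-1}\sum_j\theta_j$. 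Every subsequent step exploits two features: $Q_y$ and $Q_z$ are \emph{diagonal} and positive definite, and the variables $\lambda_i$ (respectively $\theta_j$) enter only through the single sum $\sum_i\lambda_i$ (respectively $\sum_j\theta_j$).

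First I would establish Assumption~\ref{ass:DecomposablePSDRankOne}. Since $Q_y^{-1}$ is diagonal, the $p$-th scalar entry $w_{\lambda_i}^{[p]}$ depends only on $\lambda_1^{[p]},\dots,\lambda_{n_r^y}^{[p]}$; hence, after the coordinate (permutation) transformation that groups the complementarity variables by state-coordinate index $p\in\Nn{n_x}$, the problem decouples into $2n_x$ independent sub-problems of the form~\eqref{eq:SubComplementarityProblem}: for each $p$, one of size $n_r^y$ arising from~\eqref{eq:CompactComplY} with matrix $(Q_y^{[p,p]})^{-1}\,\one_{n_r^y\times n_r^y}$, and one of size $n_r^z$ from~\eqref{eq:CompactComplZ} with matrix $(Q_z^{[p,p]})^{-1}\,\one_{n_r^z\times n_r^z}$. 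Each of these equals $m m^\top$ with $m=\sqrt{(Q_y^{[p,p]})^{-1}}\,\one_{n_r^y}$ (respectively with $Q_z$), which is positive semidefinite, of rank one, and element-wise strictly positive, so in particular the minimality requirement $m^{[j]}\neq 0$ is satisfied.

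Next, Assumption~\ref{ass:NontrivialSolutions}: for the $p$-th $\lambda$-sub-problem the vector $C_i x + D_i u + e_i$ of~\eqref{eq:SubComplementarityProblem} has $k$-th entry $\bar\psi^{[p]}(x,u) - \big(A_{y,k}x + B_{y,k}u + c_{y,k}\big)^{[p]}$, so the existence of an index $k$ making it negative is exactly the component-wise restatement of~\eqref{eq:StrictAffineSupportConditionCvx}; the $\theta$-sub-problems are handled identically via~\eqref{eq:StrictAffineSupportConditionConcave}. Finally, Assumption~\ref{ass:WellPosedness} follows from Lemma~\ref{lem:NecessaryConditionWellPosedness}: $E_w$ is symmetric and, being block diagonal with blocks $\one_{n_r^y}\one_{n_r^y}^\top\otimes Q_y^{-1}$ and $\one_{n_r^z}\one_{n_r^z}^\top\otimes Q_z^{-1}$ that are positive semidefinite, is itself positive semidefinite; moreover $w\in\nullspace(E_w)$ forces $Q_y^{-1}\sum_i\lambda_i=0$ and $Q_z^{-1}\sum_j\theta_j=0$, hence $\sum_i\lambda_i=0$ and $\sum_j\theta_j=0$, whence $B_w w = 0$, i.e. $\nullspace(E_w)\subseteq\nullspace(B_w)$.

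I expect the main obstacle to be conceptual rather than computational: recognizing that although $E_w$ as written has only rank-$n_x$ diagonal blocks, the diagonality of $Q_y$ and $Q_z$ is precisely what permits splitting each block along the state coordinates into genuinely rank-one pieces, as Assumption~\ref{ass:DecomposablePSDRankOne} demands. Once this reordering is in place, the remaining verifications are direct.
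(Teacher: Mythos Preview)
Your proof is correct and follows the same overall plan as the paper's: the permutation grouping complementarity variables by state coordinate $p\in\Nn{n_x}$ is exactly the paper's decomposition, yielding the same rank-one matrices $(Q_y^{[p,p]})^{-1}\one_{n_r^y}\one_{n_r^y}^\top$, and your verification of Assumption~\ref{ass:NontrivialSolutions} from~\eqref{eq:StrictAffineSupportCondition} is identical to the paper's. The only difference is in establishing Assumption~\ref{ass:WellPosedness}: you invoke Lemma~\ref{lem:NecessaryConditionWellPosedness} and verify $\nullspace(E_w)\subseteq\nullspace(B_w)$ algebraically, whereas the paper instead observes that~\eqref{eq:PWACvxDynamicsStationarity} and~\eqref{eq:PWACvxDynamicsComplementarity} are the KKT conditions of the strictly convex PQP~\eqref{eq:IPQPConvexCost}--\eqref{eq:IPQPConvexConstraints}, whose primal optimizer $\hat y$ is unique regardless of which dual multipliers $\lambda_i$ certify it. Your route is slightly more self-contained, since it stays purely linear-algebraic and reuses an already-proved lemma; the paper's route has the advantage of explaining \emph{why} well-posedness holds in terms of the underlying optimization structure.
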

\begin{proof}
For a fixed $x \in \R^{n_x}$ and $u \in \R^{n_u}$ the conditions~\eqref{eq:PWACvxDynamicsStationarity} and~\eqref{eq:PWACvxDynamicsComplementarity} form the KKT system of the strictly convex PQP~\eqref{eq:IPQPConvexCost}-\eqref{eq:IPQPConvexConstraints} which has a unique minimizer, i.e.\ $\hat y$ is independent of the values of the internal multipliers $\lambda_i$. An analogous argument holds for the $\theta_j$ in~\eqref{eq:CompactWZ}-\eqref{eq:CompactComplZ} and $\hat z$ in~\eqref{eq:IPQPConcaveCost}-\eqref{eq:IPQPConcaveConstraints}, hence~\eqref{eq:CompactLCModel} satisfies Assumption~\ref{ass:WellPosedness}.

Careful examination of~\eqref{eq:CompactLCModel} reveals that the internal multipliers always appear as their sum over the regions of the convex decomposition. Accordingly, coupling between the internal multipliers occurs only for corresponding components. For any $i \in \Nn{n_x}$ we can collect all internal multipliers influencing that particular component of $x^+$ in the following LCP:
\ifTwoColumn
\[
\forall k \in \Nn{n_r^y} \colon \quad
0 \leq w^{[i]}_{k} \perp \lambda_k^{[i]} \geq 0,
\]
for which we define
\begin{align*}
w^{[i]}_{k} &:= \frac{1}{Q_y^{[i,i]}} \sum_{j=1}^{n_r^y} \lambda_j^{[i]} + \left( A_\psi^{[i,:]} - A_{y,k}^{[i,:]}\right) x \\
&\qquad + \left( B_\psi^{[i,:]} -  B_{y,k}^{[i,:]} \right) u + c_\psi^{[i]} - c_{y,k}^{[i]} .
\end{align*}
\else
\[
\forall k \in \Nn{n_r^y} \colon
0 \leq \frac{1}{Q_y^{[i,i]}} \sum_{j=1}^{n_r^y} \lambda_j^{[i]} + \left( A_\psi^{[i,:]} - A_{y,k}^{[i,:]}\right) x + \left( B_\psi^{[i,:]} -  B_{y,k}^{[i,:]} \right) u + c_\psi^{[i]} - c_{y,k}^{[i]} \perp \lambda_k^{[i]} \geq 0
\]
\fi
Collecting terms appropriately results in a complementarity sub-problem~\eqref{eq:SubComplementarityProblem} with
\begin{align*}
\ifTwoColumn
M_i &:= \frac{1}{Q_y^{[i,i]}} \one_{n_r^y} \one_{n_r^y}^\top, &
\else
M_i &:= \frac{1}{Q_y^{[i,i]}} \one_{n_r^y \times n_r^y} = \frac{1}{Q_y^{[i,i]}} \one_{n_r^y} \one_{n_r^y}^\top, &
\fi
C_i &:= \begin{pmatrix}A_\psi^{[i,:]} - A_{y,1}^{[i,:]} \\ \vdots \\ A_\psi^{[i,:]} - A_{y,n_r^y}^{[i,:]}\end{pmatrix}, \\
D_i &:= \begin{pmatrix}B_\psi^{[i,:]} - B_{y,1}^{[i,:]} \\ \vdots \\ B_\psi^{[i,:]} - B_{y,n_r^y}^{[i,:]}\end{pmatrix}, &
e_i &:= \begin{pmatrix}c_\psi^{[i]} - c_{y,1}^{[i]} \\ \vdots \\ c_\psi^{[i]} - c_{y,n_r^y}^{[i]}\end{pmatrix}.
\end{align*}
An analogous argument holds for the LCP in $\theta$, hence~\eqref{eq:CompactLCModel} satisfies Assumption~\ref{ass:DecomposablePSDRankOne}.

Finally, the component-wise satisfaction of~\eqref{eq:StrictAffineSupportCondition} guarantees that for every $i \in \Nn{n_x}$ there exists a $k \in \Nn{n_r^y}$ such that
\[
\left( A_\psi^{[i,:]} - A_{y,k}^{[i,:]}\right) x + \left( B_\psi^{[i,:]} -  B_{y,k}^{[i,:]} \right) u + c_\psi^{[i]} - c_{y,k}^{[i]} < 0.
\]
Again, we can show the same for the LCP in $\theta$ and have proven that~\eqref{eq:CompactLCModel} satisfies Assumption~\ref{ass:NontrivialSolutions}.
\end{proof}

Theorem~\ref{thm:LCModelAssumptions} finally proves our claim from the beginning of the paper --- every continuous PWA model~\eqref{eq:PWAModel} can be rewritten as an equivalent LC model~\eqref{eq:LCModel} satisfying Assumptions~\ref{ass:WellPosedness},~\ref{ass:DecomposablePSDRankOne}, and~\ref{ass:NontrivialSolutions}. Accordingly, such LC models are very general and cover a wide range of applications.
As will be seen in the next section, the slightly less compact but sparser formulation~\eqref{eq:PWAasTwoCvxPWAModels} can be advantageous for computational purposes. %

The example in Section~\ref{subsec:Example} illustrates that the possibly infinite number of admissible internal multipliers for a given $(x,u)$ can lead to spurious local minima in~\eqref{eq:LCCFTOC} that do not correspond to locally optimal control trajectories. It is easy to show that when writing a PWA system~\eqref{eq:PWAModel} as the LC model~\eqref{eq:CompactLCModel}, different consistent internal multipliers only exist for a given $(x,u) \in \Omega$ when there exists an $i \in \Nn{n_x}$ such that
\[
A^{[i,:]}_{y,j} x + B^{[i,:]}_{y,j} u + c^{[i]}_{y,j} = A^{[i,:]}_{y,k} x + B^{[i,:]}_{y,k} u + c^{[i]}_{y,k}
\]
for $j \neq k$ in~\eqref{eq:CvxConcaveParts} (or analogously for $z$). This is naturally the case when $(x,u)$ is on the boundary of two neighboring regions of the convex or concave part of the PWA dynamics. Accordingly, any control trajectory $\*u$ that lands on such a region boundary will correspond to a local minimum in~\eqref{eq:LCCFTOC} with $\mathcal{M}(\*u, \*x)$ not a singleton. It is conceivable that many of these local minima do not correspond to locally optimal control trajectories per Definition~\ref{def:OptimalTrajectories}. While the simulations below indicate that this problem is not severe it is an issue that warrants future research.

\section{Numerical Results}
\label{sec:Numerics}

We first present a small toy example adapted from~\cite{Hempel2013} that illustrates how PWA dynamics can be systematically transformed into an LC model. By construction, the resulting model will satisfy the conditions of Theorem~\ref{thm:LCModelAssumptions} and, hence, Theorem~\ref{thm:OptimalityConditions}. We then present extensive computational results on randomly generated PWA systems that show the possible benefits from reformulating PWA dynamics as described in Section~\ref{sec:PWASystems}. Using standard NLP solvers to solve the resulting MPCC~\eqref{eq:LCCFTOC} instead of dealing with a MIP formulation of~\eqref{eq:PWACFTOC} can be significantly faster without sacrificing much in terms of solution quality.

\subsection{Illustrative Example}

Consider the PWA dynamics $f$ shown in red in the middle of Figure~\ref{fig:2DExample} which represents the dynamics of a hypothetical PWA system~\eqref{eq:PWAModel} with one state $x$ and one control input $u$. Using the results from~\cite{Hempel2013,Hempel2015}, it can be decomposed into the two convex PWA functions $\psi$ and $\phi$ shown in the figure such that $f(x,u) = \psi(x,u) - \phi(x,u)$.

\begin{figure}[thb]
\begin{center}
\includegraphics[width=.9\columnwidth]{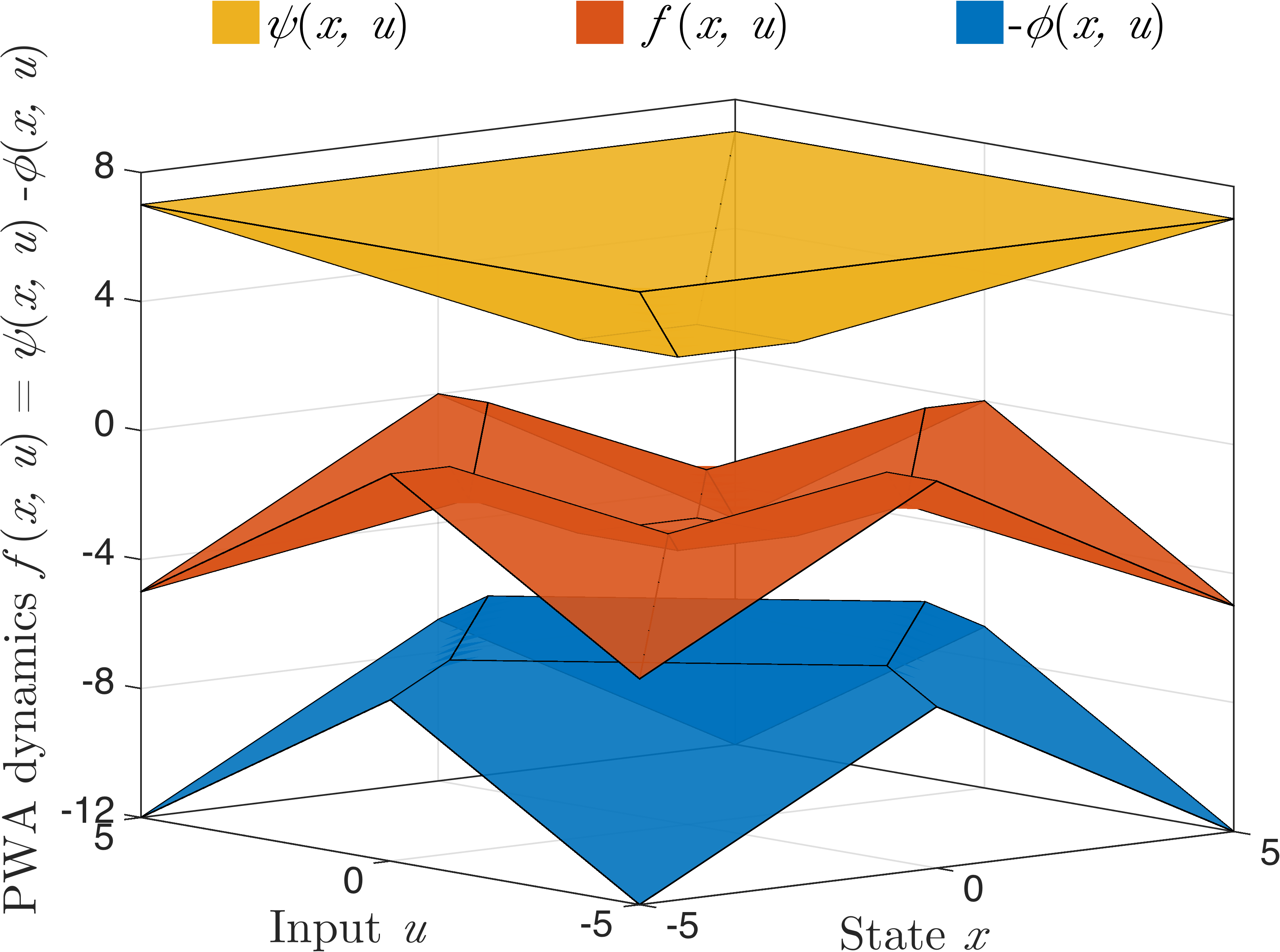}
\caption{Example of PWA system dynamics $f$ (in the middle) and its decomposition into a convex part $\psi$ (above) and a concave part $-\phi$ (below).}
\label{fig:2DExample}
\end{center}
\end{figure}

The explicit expressions for the functions $\psi \colon [-5,5] \times [-5,5] \to \R$ and $\phi \colon [-5,5] \times [-5,5] \to \R$ are given as follows:
\begin{align*}
\psi(x,u) &= \max \left\{ 3, \enspace x + 2, \enspace -x + 2, \enspace u + 2, \enspace -u + 2 \right\} \\
\phi(x,u) &= \max \left\{ 6,\enspace  x + u + 2, \enspace x - u + 2, \right. \\
&\qquad \qquad\,\, \left. -x - u+ 2, \enspace -x + u + 2 \right\}
\end{align*}
Their difference yields the given dynamics $f(\cdot) = \psi(\cdot) - \phi(\cdot)$ which are omitted due to space limitations.

Let us choose $\bar{\psi}(x,u) := 1$ and $\bar{\phi}(x,u) := x + u$, which satisfy~\eqref{eq:UncOptimizerSupportAss} with respect to $\psi$ and $\phi$, respectively. We can now use Lemma~\ref{lem:CvxPWAFctAsPQP} to construct the following  optimization problem in the two scalar decision variables $z_1$ and $z_2$ from the expressions for $\psi$ and $\phi$:
\begin{align}
  &\min_{z_1,z_2} \quad \frac{1}{2} \left(z_1 - 1 \right)^2 + \frac{1}{2} \left(z_2 - \left(x + u \right) \right)^2 \label{eq:ExampleOptimizingProcess} \\
  &\begin{aligned}
  \text{s.t.} \quad z_1 &\geq 3        & \quad z_2 &\geq 6               \nonumber \\
  z_1 &\geq \hphantom{-}x +2        & z_2 &\geq -x-u +2\nonumber \\
  z_1 &\geq -x +2                  & z_2 &\geq \hphantom{-}x-u+2  \nonumber \\
  z_1 &\geq \hphantom{-}u +2       & z_2 &\geq -x+u+2 \nonumber \\
  z_1 &\geq -u +2                  & z_2 &\geq \hphantom{-}x+u +2 \nonumber \\
  \end{aligned} \nonumber \\
  &\hspace{1.23cm} (x,u) \in [-5,5] \times [-5,5] \nonumber
\end{align}
Due to its separability we can obtain the explicit solution to this optimization problems as $z_1^*(x,u) = \psi(x,u)$ and $z_2^*(x,u) = \phi(x,u)$. By construction, the PWA dynamics $f$ shown in Figure~\ref{fig:2DExample} are recovered as $f(x,u) = z_1^*(x,u) - z_2^*(x,u) = \psi(x,u) - \phi(x,u)$. In other words, we have obtained an inverse optimization model~\eqref{eq:IPQPModel} for the PWA dynamics $f$.

It is straightforward to derive the LC model representations~\eqref{eq:PWAasTwoCvxPWAModels} and~\eqref{eq:CompactLCModel} from~\eqref{eq:ExampleOptimizingProcess} by following the derivations in Section~\ref{sec:PWASystems}. 
Because of our choice for $\bar \psi$ and $\bar \phi$, the resulting LC model will by construction satisfy the conditions of Theorem~\ref{thm:LCModelAssumptions}. Hence, instead of solving the optimal control problem~\eqref{eq:PWACFTOC} for the original PWA dynamics we can instead solve the MPCC~\eqref{eq:LCCFTOC} and be assured that Theorem~\ref{thm:OptimalityConditions} holds. Accordingly, we have a good chance to solve the MPCC problem with standard NLP solvers which try to find solutions to the KKT conditions.

\subsection{Computational Experiments}

To investigate the possible computational benefits gained over traditional approaches from using an LC model~\eqref{eq:LCModel} and solving the optimal control MPCC~\eqref{eq:LCCFTOC}, we randomly generated~10 different PWA models~\eqref{eq:PWAModel} with $n_x = 3$ and  $n_u = 1$.
The regions of these PWA models form a Delaunay-triangulation of the (bounded) system domain with 17 to 19 regions per system.   We used the Multi-Parametric Toolbox~3.0~\cite{Herceg2013} and ECOS~\cite{Domahidi2013} for all geometric computations necessary to generate the systems.

To solve~\eqref{eq:PWACFTOC} we used the MLD reformulation from~\cite{Bemporad1999} and modeled the resulting  MIP formulation with YALMIP~\cite{Yalmip}. For each prediction horizon $N \in \{2, 4, \dots, 10\}$ we generated~50 different initial states $x_0$ inside the domain of the system such that~\eqref{eq:PWACFTOC} was feasible, and solved the  mixed-integer optimal control problem with Gurobi~\cite{gurobiMendeley}. As the cost function we chose
\[
\ell_k(x_k, u_k) = \frac{1}{2} \left( \left\lVert x_k \right\rVert_2^2 + \left\lVert u_k \right\rVert_2^2 \right)\text{ and } \ell_N(x_N) = \frac{1}{2} \left\lVert x_N \right\rVert_2^2.
\]

Each of the generated PWA systems was transformed into both the general LC form~\eqref{eq:PWAasTwoCvxPWAModels} and the compact form~\eqref{eq:CompactLCModel}. The design parameters were chosen as $Q_y = Q_z = I$ and $\bar \psi$ and $\bar \phi$ by shifting parts of the convex decomposition downwards as shown in~\eqref{eq:StrictAffineSupportConstruction}.
The resulting formulations of~\eqref{eq:LCCFTOC} were solved from the same initial states $x_0$ using the general purpose NLP solver IPOPT~\cite{Wachter2006}. No special treatment of the complementarity constraints~\eqref{eq:LCMPCComplementarity} such as regularization~\cite{Scholtes2001} or penalization~\cite{Ralph2004} was performed; we simply implemented them as scalar bilinear inequalities.

\begin{figure}[thb]
\begin{center}
\input{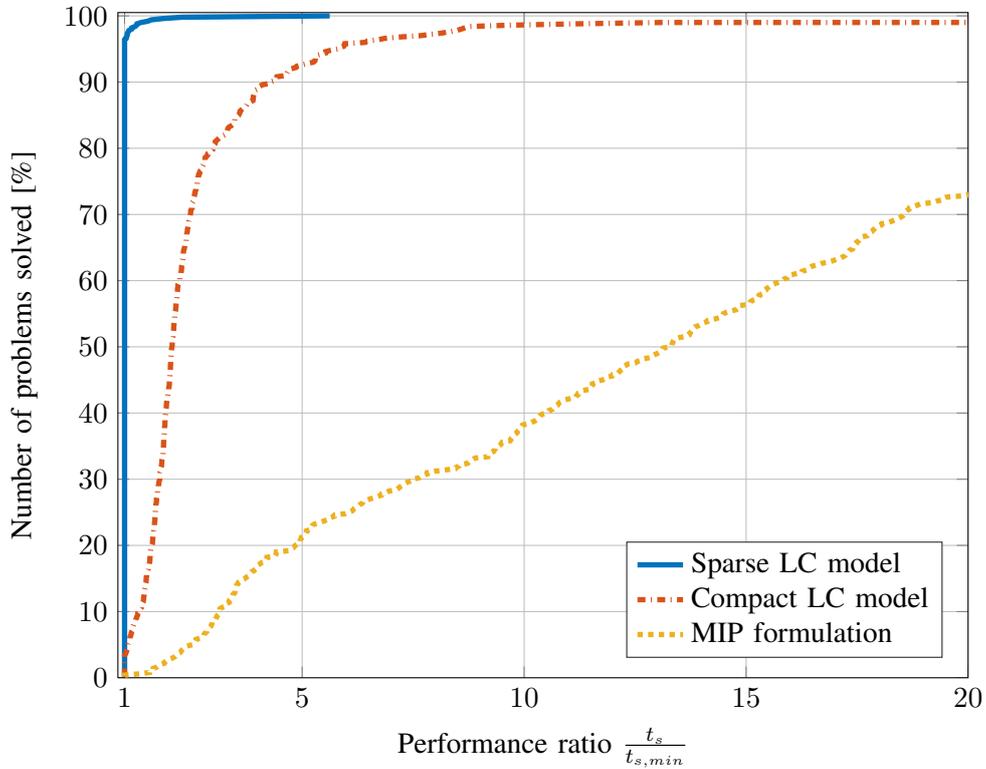}
\caption{Performance profile for $N=8$.}
\label{fig:PerformanceProfile}
\end{center}
\end{figure}

To get an impression of the performance that can be expected, Figure~\ref{fig:PerformanceProfile} shows a performance profile~\cite{Dolan2002} of the computation times required to solve the 500 problem instances for $N=8$. Define the performance ratio
\[
r_{p,s} := \frac{t_{p,s}}{\min_s t_{p,s}},
\]
where $t_{p,s}$ is the time it takes solver $s$ to solve problem instance $p$. The performance profile then plots the function $\rho_s \colon  \R \to [0, 1]$ defined as
\[
\rho_s(\tau) := \frac{1}{n_p} \left\lvert \{ p \mid r_{p,s} \leq \tau \} \right\rvert,
\]
which (for large numbers $n_p$ of problems $p$) is the probability that a performance ratio $r_{p,s}$ is within a factor of $\tau \in \R$ of the best possible ratio.

It can be seen from Figure~\ref{fig:PerformanceProfile} that in over 95~\% of all problem instances for $N=8$ the sparse LC formulation~\eqref{eq:PWAasTwoCvxPWAModels} was solved fastest. Additionally, for  every problem instance the MIP approach is (in terms of computation time) beaten by either the sparse LC~\eqref{eq:PWAasTwoCvxPWAModels} or the compact LC~\eqref{eq:CompactLCModel} approach. It is worth noting that such a consistent performance of a general purpose NLP algorithm such as IPOPT is a direct result of the special formulation of the optimal control problem~\eqref{eq:LCCFTOC} and Theorem~\ref{thm:OptimalityConditions}. It should not be expected when solving a general MPCC~\eqref{eq:MPCC} because the KKT conditions~\eqref{eq:KKTConditions} might not be satisfied at an optimum.

In addition to the often slower solution times for the compact formulation~\eqref{eq:CompactLCModel}, IPOPT encountered computational issues  for~9 out of~2500 problem instances, e.g.\ reaching the maximum number of iterations, numerical issues etc. The sparse formulation~\eqref{eq:PWAasTwoCvxPWAModels} had no such issues and solved all instances to local optimality.
While the sparse formulation~\eqref{eq:PWAasTwoCvxPWAModels} requires more decision variables in the optimal control problem~\eqref{eq:LCCFTOC} it also results in sparser constraint matrices whose structure may be more favorable to IPOPT. The issues of the compact formulation might be relieved by proper scaling or choice of the design parameters $Q_y$, $Q_z$, $\bar \psi$, and $\bar \phi$ in~\eqref{eq:IPQPModel}, but we will for the rest of this section only consider the sparse formulation~\eqref{eq:PWAasTwoCvxPWAModels}.

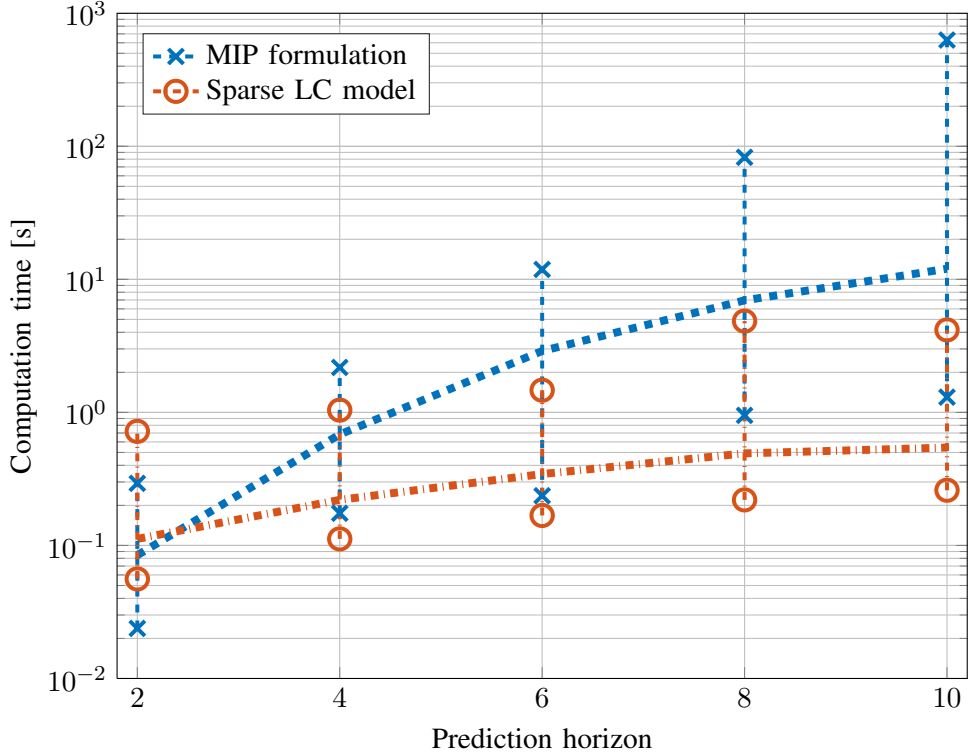
\begin{figure}[thb]
\begin{center}
%
\definecolor{mycolor1}{rgb}{0.00000,0.44700,0.74100}%
\definecolor{mycolor2}{rgb}{0.85000,0.32500,0.09800}%
\begin{tikzpicture}

\begin{axis}[%
width=0.761\columnwidth,
height=0.6\columnwidth,
at={(0\columnwidth,0\columnwidth)},
scale only axis,
xmin=1.8,
xmax=10.2,
xtick={ 2,  4,  6,  8, 10},
xlabel={Prediction horizon},
xmajorgrids,
ymode=log,
ymin=0.01,
ymax=1000,
yminorticks=true,
ylabel={Computation time [s]},
ymajorgrids,
yminorgrids,
axis background/.style={fill=white},
legend style={at={(0.03,0.97)},anchor=north west,legend cell align=left,align=left,draw=white!15!black}
]
\addplot [color=mycolor1,dashed,line width=3.0pt,forget plot]
  table[row sep=crcr]{%
2	0.0844265222549438\\
4	0.682844996452332\\
6	2.89228105545044\\
8	6.94967150688171\\
10	12.0162440538406\\
};
\addplot [color=mycolor2,dashdotted,line width=3.0pt,forget plot]
  table[row sep=crcr]{%
2	0.112006999999998\\
4	0.220013999999992\\
6	0.344021999999995\\
8	0.492030999999997\\
10	0.544034000000011\\
};
\addplot [color=mycolor1,dashed,line width=1.5pt,mark size=4.0pt,mark=x,mark options={solid},forget plot]
  table[row sep=crcr]{%
2	0.0237839221954346\\
2	0.292904138565063\\
};
\addplot [color=mycolor1,dashed,line width=1.5pt,mark size=4.0pt,mark=x,mark options={solid},forget plot]
  table[row sep=crcr]{%
4	0.174551010131836\\
4	2.17740297317505\\
};
\addplot [color=mycolor1,dashed,line width=1.5pt,mark size=4.0pt,mark=x,mark options={solid},forget plot]
  table[row sep=crcr]{%
6	0.236302852630615\\
6	11.8742837905884\\
};
\addplot [color=mycolor1,dashed,line width=1.5pt,mark size=4.0pt,mark=x,mark options={solid},forget plot]
  table[row sep=crcr]{%
8	0.949717044830322\\
8	82.6818161010742\\
};
\addplot [color=mycolor1,dashed,line width=1.5pt,mark size=4.0pt,mark=x,mark options={solid}]
  table[row sep=crcr]{%
10	1.30181503295898\\
10	629.060441970825\\
};
\addlegendentry{MIP formulation};

\addplot [color=mycolor2,dashdotted,line width=1.5pt,mark size=4.0pt,mark=o,mark options={solid},forget plot]
  table[row sep=crcr]{%
2	0.0560039999999979\\
2	0.720044999999999\\
};
\addplot [color=mycolor2,dashdotted,line width=1.5pt,mark size=4.0pt,mark=o,mark options={solid},forget plot]
  table[row sep=crcr]{%
4	0.112006999999991\\
4	1.040065\\
};
\addplot [color=mycolor2,dashdotted,line width=1.5pt,mark size=4.0pt,mark=o,mark options={solid},forget plot]
  table[row sep=crcr]{%
6	0.168010999999979\\
6	1.46809200000001\\
};
\addplot [color=mycolor2,dashdotted,line width=1.5pt,mark size=4.0pt,mark=o,mark options={solid},forget plot]
  table[row sep=crcr]{%
8	0.220014000000106\\
8	4.85630299999991\\
};
\addplot [color=mycolor2,dashdotted,line width=1.5pt,mark size=4.0pt,mark=o,mark options={solid}]
  table[row sep=crcr]{%
10	0.260015999999951\\
10	4.16826100000003\\
};
\addlegendentry{Sparse LC model};

\end{axis}
\end{tikzpicture}%
\caption{Computation times for different prediction horizons $N$. The center line is the median computation time over 500 problem instances for each prediction horizon and the error bars indicate the best and worst computation times, respectively.}
\label{fig:CompTimes}
\end{center}
\end{figure}
Figure~\ref{fig:CompTimes} shows the best-case, worst-case, and median computation times necessary to solve the 500 problem instances for each prediction horizon for the sparse, general LC model~\eqref{eq:PWAasTwoCvxPWAModels} and the PWA model~\eqref{eq:PWAModel}. %
The LC formulation~\eqref{eq:PWAasTwoCvxPWAModels} shows both much shorter computation times as well as gentler scaling for longer prediction horizons, e.g.\ the worst-case computation time for $N=10$ is two orders of magnitude larger for the MIP formulation.

To evaluate solution quality we can consider the relative difference between the achieved objective function values
\[
\frac{J_{NLP}^* - J_{MIP}^*}{J_{MIP}^*} \times 100~\%.
\]
For 99.2~\% of the 2500 problem instances over all prediction horizons this quantity is lower than~10~\% and in~71.9~\% of the cases the inverse optimization approach even yields a globally optimal solution.

These results are  affected by the choice of design parameters when constructing the inverse optimization model~\eqref{eq:IPQPModel}. For the results shown here we chose $\bar \psi$ and $\bar \phi$ as shown in~\eqref{eq:StrictAffineSupportConstruction} with $\eta = \xi = 0.5 \cdot\one_{}$, while with $\eta = \xi = 10 \cdot \one_{}$ less than 50~\% of the problem instances are solved to global optimality. Referring back to our example in Section~\ref{subsec:Example}, the value of $\eta$ corresponds to the (invariant) value of the sum of the internal multipliers in~\eqref{eq:Example1InnerMultipliers}. Hence, a larger value of $\eta$ corresponds to a larger set (in volume) of equivalent internal multipliers, i.e.\ cases 1 and 3 in Figure~\ref{fig:Ex1ValueFunction} move further apart.

For 187 of the 249 problem instances whose optimal values $J^*_{NLP}$ are further than 1~\% away from the globally optimal $J^*_{MIP}$, IPOPT returns control input trajectories $\*u_{NLP}$ which for at least one time step land exactly on the boundary between two regions of the PWA system. The other 62 problem instances correspond to locally optimal input trajectories in the sense of Definition~\ref{def:OptimalTrajectories} that traverse the interior of the PWA system's regions. To verify whether the boundary cases correspond to locally optimal trajectories we could use Theorem~\ref{thm:OptimalInputTrajectories} (which requires a vertex enumeration for the set $\mathcal{M}(\*u_{NLP}, \*x_{NLP})$) or one of its Corollaries. Due to the relatively short computation times for the optimal control MPCC it may be a viable strategy to solve~\eqref{eq:LCCFTOC} from different initial points and use the best result. This would reduce the chance of obtaining a solution on a region boundary and could even be implemented in parallel if the hardware allowed this.

Using the LC formulation~\eqref{eq:PWAasTwoCvxPWAModels} instead of an MIP approach to the PWA model~\eqref{eq:PWAModel} presents a trade-off where a significantly shorter and more consistent solution time is bought with a potential decrease in solution quality. Simulation evidence indicates that this degradation is negligible and its severity within reasonable bounds in most cases,
in particular because the optimal control problem~\eqref{eq:LCCFTOC} is typically solved in a receding horizon setting where only the first step of the control input trajectory $\* u^*$ would be applied before re-solving the problem at the next time step~\cite{Rawlings2009}.

\section{Conclusion}

In this paper we consider constrained optimal control problems for hybrid dynamical systems of linear complementarity type satisfying a number of structural assumptions. The optimal control problems are then mathematical programs with complementarity constraints which can be modeled as continuous nonlinear programs.
Under the assumptions made, we can prove that the classical Karush-Kuhn-Tucker conditions are necessary and sufficient for optimality which is rarely the case for general MPCCs.

Additionally, it is shown how continuous piecewise-affine systems can always be written as LC models satisfying our initial assumptions. This enables the treatment of control problems for a large and important class of hybrid systems as continuous NLPs using standard solution software. Numerical simulations illustrate the efficacy of this approach where the NLP can be solved in significantly shorter and more consistent time than a more traditional mixed-integer approach. The downside is a possible degradation in solution quality although this is often negligible since the NLP approach finds the global optimum in many cases. %

It should be investigated how much can be gained from using a solution method tailored to MPCCs. A number of relaxation and penalization methods have been suggested in the literature, cf.~\cite{Hoheisel2011} for a recent overview, but they have to accommodate the fact that M-stationarity is the strongest optimality condition available for most MPCCs. A straightforward nonlinear programming approach to~\eqref{eq:LCCFTOC} can prove successful due to the strong stationarity results in this paper. Alternatively, an MPCC-method that guarantees convergence to S-stationary solutions could be used, cf.~\cite{Izmailov2012}.

\iftoggle{useBiber}{
\printbibliography
~
}{
\bibliographystyle{IEEEtran}
\bibliography{bibliography.bib}
}

\balance

\end{document}